\documentclass[12pt, a4paper, oneside]{amsart}
\usepackage{amsmath, amsthm, amssymb, geometry, mathrsfs, hyperref, tikz-cd}
\usepackage[shortlabels, inline]{enumitem}

\newtheorem{theorem}{Theorem}[section]

\newtheorem{lemma}[theorem]{Lemma}
\newtheorem{corollary}[theorem]{Corollary}

\theoremstyle{definition}
\newtheorem{definition}[theorem]{Definition}
\newtheorem{question}[theorem]{Question}

\newtheorem{example}[theorem]{Example}
\newtheorem{remark}[theorem]{Remark}

\newcommand\B{\mathbb{B}}
\newcommand\C{\mathbb{C}}
\newcommand\D{\mathbb{D}}

\newcommand\N{\mathbb{N}}

\newcommand\R{\mathbb{R}}
\newcommand\Z{\mathbb{Z}}

\newcommand{\cC}{\mathcal{C}}

\newcommand{\cO}{\mathcal{O}}

\newcommand{\id}{\mathrm{id}}
\newcommand{\pr}{\mathrm{pr}}

\newcommand{\T}{\mathrm{T}}

\newcommand{\J}{\mathrm{J}}
\renewcommand{\j}{\mathrm{j}}

\DeclareMathOperator{\Op}{Op}

\newcommand{\Ell}{\mathrm{Ell}}

\begin{document}

\title[Oka properties of complements of holomorphically convex sets]{Oka properties of complements of holomorphically convex sets}
\author{Yuta Kusakabe}
\address{Department of Mathematics, Graduate School of Science, Kyoto University, Kyoto 606-8502, Japan}
\email{kusakabe@math.kyoto-u.ac.jp}
\subjclass{Primary 32E20, 32Q56; Secondary 32E10, 32M17}
\keywords{Oka manifold, holomorphic convexity, density property, ellipticity}

\begin{abstract}
Our main theorem states that the complement of a compact holomorphically convex set in a Stein manifold with the density property is an Oka manifold.
This gives a positive answer to the well-known long-standing problem in Oka theory whether the complement of a compact polynomially convex set in $\C^{n}$ $(n>1)$ is Oka.
Furthermore, we obtain new examples of nonelliptic Oka manifolds which negatively answer Gromov's question.
The relative version of the main theorem is also proved.
As an application, we show that the complement $\C^{n}\setminus\R^{k}$ of a totally real affine subspace is Oka if $n>1$ and $(n,k)\neq(2,1),(2,2),(3,3)$.
\end{abstract}

\maketitle

%
%

\section{Introduction}

The aim of this paper is to solve the long-standing problem which asks whether the complement of a compact polynomially convex set in $\C^{n}$ $(n>1)$ is an Oka manifold \cite[Problem 3.11]{Forstneric2003d}, \cite[Problem]{Forstneric2001a}.
This yields new examples of nonelliptic Oka manifolds which negatively answer Gromov's question \cite[Question 3.2.A$''$]{Gromov1989}.

The notion of Oka manifolds was born in the last few decades, but its roots lie in the classical work of Oka \cite{Oka1939} on the second Cousin problem, that is, the problem of finding a holomorphic function with prescribed zeros.
Oka proved that this problem is solvable on a domain of holomorphy in $\C^{n}$ $(n\geq1)$ if it has a continuous solution, which is now called the Oka principle.
Equivalently, this Oka principle states that holomorphic line bundles on such a domain are holomorphically isomorphic if they are topologically isomorphic.
This was generalized by Grauert \cite{Grauert1957,Grauert1958b} to holomorphic principal bundles over a Stein manifold.
Also in Grauert's Oka principle, the essential point is that every continuous section of a holomorphic fiber bundle with a complex homogeneous fiber can be deformed into a holomorphic section \cite{Grauert1957,Grauert1958b,Grauert1963,Ramspott1965}.
Therefore, the Oka principle can also be viewed as the homotopy principle in complex analysis.

From the viewpoint of the homotopy principle, Gromov \cite{Gromov1986,Gromov1989} initiated modern Oka theory in the late 1980s.
In his seminal paper \cite{Gromov1989}, Gromov introduced the notion of elliptic manifolds which generalizes complex homogeneous manifolds.
A complex manifold $Y$ is said to be \emph{elliptic} if there exists a holomorphic map $s:E\to Y$ from a holomorphic vector bundle $E$ on $Y$ such that $s(0_{y})=y$ and $s|_{E_{y}}:E_{y}\to Y$ is a submersion at $0_{y}$ for each $y\in Y$.
Gromov's main result in \cite{Gromov1989} was that every elliptic manifold $Y$ enjoys the so-called parametric Oka property\footnote{The condition $\Ell_\infty$ in \cite[Remark 3.2.A]{Gromov1989} is equivalent to this Oka property (cf. \cite[\S 5.15]{Forstneric2017}).}:

\emph{
  For any Stein manifold $X$, any closed complex subvariety $X'\subset X$, any compact $\cO(X)$-convex set $K\subset X$, any pair of compact sets $P_{0}\subset P$ in a Euclidean space and any (continuous) family of continuous maps $f_{0}:P\times X\to Y$ such that $f_{0}|_{P_{0}\times X}$, $f_{0}|_{P\times X'}$ and $f_{0}|_{P\times K}$ are families of holomorphic maps, there exists a homotopy $f_{t}:P\times X\to Y$ $(t\in[0,1])$ such that the following hold for each $t\in[0,1]$:
  \begin{enumerate}
  \item $f_{t}=f_{0}$ on $(P_{0}\times X)\cup(P\times X')$,
  \item $f_{t}|_{P\times K}$ is a family of holomorphic maps which approximates $f_{0}$ uniformly on $P\times K$, and
  \item $f_{1}:P\times X\to Y$ is a family of holomorphic maps.
  \end{enumerate}
}

The relative version of this Oka principle was also established in \cite{Gromov1989}, thereby generalizing the classical Oka principle described above.
In the same paper, Gromov asked whether a Runge type approximation property on a certain class of compact sets implies the above parametric Oka property \cite[3.4.(D)]{Gromov1989}, which was finally solved by Forstneri\v{c} in 2009 \cite{Forstneric2006, Forstneric2009}.
This implies that several ostensibly different Oka properties are equivalent \cite[\S 5.15]{Forstneric2017}, and a complex manifold enjoying one (and hence all) of the Oka properties is called an Oka manifold.

\begin{definition}[{\cite{Forstneric2009}}]
  \label{definition:Oka}
  A complex manifold $Y$ is an \emph{Oka manifold} if any holomorphic map from an open neighborhood of a compact convex set $K\subset\C^n\ (n\in\N)$ to $Y$ can be uniformly approximated on $K$ by holomorphic maps $\C^n\to Y$.
\end{definition}

The theory of Oka manifolds has many applications (cf. \cite{Eliashberg1992,Ivarsson2012a,Schurmann1997} and \cite[Part III]{Forstneric2017}) as the homotopy principle does in differential topology (cf. \cite{Gromov1986}).
Using this theory, Eliashberg--Gromov \cite{Eliashberg1992} and Sch\"{u}rmann \cite{Schurmann1997} solved Forster's conjecture on the optimal dimensions of proper holomorphic embeddings of Stein manifolds.
It is known that the construction of proper holomorphic maps is closely related to the Oka property of the complement of a compact holomorphically convex set \cite{Forstneric2019,Forstneric2014}.
Moreover, it is important to verify the Oka property of such a complement also because of its potential \cite{Andrist2016} to be a counterexample to Gromov's question on the implication from the Oka property to ellipticity \cite[Question 3.2.A$''$]{Gromov1989}.
However, this Oka property has remained an open problem for decades since it was asked in \cite[Problem 3.11]{Forstneric2003d}, \cite[Problem]{Forstneric2001a} at the beginning of the 21st century.

Our main theorem is the following, which solves this long-standing problem.

\begin{theorem}
  \label{theorem:main}
  For any Stein manifold $Y$ with the density property and any compact $\cO(Y)$-convex set $K\subset Y$, the complement $Y\setminus K$ is Oka.
\end{theorem}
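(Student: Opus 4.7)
My strategy is to verify the convex approximation property (CAP), which characterizes the Oka property. So I fix a compact convex set $Q\subset\C^n$, an open neighborhood $U\supset Q$, and a holomorphic map $f\colon U\to Y\setminus K$, and I aim to approximate $f$ uniformly on $Q$ by entire holomorphic maps $\C^n\to Y\setminus K$.

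The first move is free. Since $Y$ is Stein with the density property, $Y$ is itself Oka, so I can approximate $f|_Q$ uniformly by some entire map $g\colon\C^n\to Y$. If the approximation is close enough, continuity forces $g(Q)$ to be disjoint from $K$; the difficulty is that $g(\C^n)$ may meet $K$ badly outside~$Q$.

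To correct $g$ into a map avoiding $K$ while preserving its behavior on $Q$, I would exhaust $\C^n$ by an increasing sequence of compact polynomially convex sets $Q=P_0\subset P_1\subset\cdots$ and inductively construct a sequence of automorphisms $\Phi_j\in\Aut(Y)$ satisfying (a) $(\Phi_j\circ g)(P_j)\cap K=\emptyset$, and (b) $\Phi_j$ is within $\varepsilon/2^j$ of $\Phi_{j-1}$ in a fixed distance on the compact set $(\Phi_{j-1}\circ g)(P_{j-1})$. A standard Fatou--Bieberbach-type argument then yields a limit $\tilde g=\lim_j \Phi_j\circ g\colon\C^n\to Y\setminus K$ that approximates $f$ on $Q$. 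Each inductive step reduces to the following key lemma: for compact sets $L\subset M\subset Y$ with $L\cap K=\emptyset$ (and, after possibly enlarging $L$ inside $Y\setminus K$, with $L\cup K$ also $\cO(Y)$-convex) and for every $\varepsilon>0$, there exists $\psi\in\Aut(Y)$ with $\sup_L d_Y(\psi,\id)<\varepsilon$ and $\psi(M)\cap K=\emptyset$.

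The main obstacle is this key lemma. The plan is to construct a holomorphic isotopy $\{\psi_t\}_{t\in[0,1]}$ of injective holomorphic maps from a Stein neighborhood of the $\cO(Y)$-convex set $L\cup M\cup K$ into $Y$, with $\psi_0=\id$, $\psi_t|_L$ arbitrarily close to the identity, and $\psi_1(M)\cap K=\emptyset$; Anders\'{e}n--Lempert theory for $Y$ (available because of the density property) then upgrades this isotopy to a genuine automorphism $\psi\in\Aut(Y)$ with the required properties. The isotopy itself should be produced by integrating complete holomorphic vector fields on $Y$ that are transverse to $K$ near the offending compact set $M\cap K$ and that vanish to prescribed order on $L$; the density property furnishes an abundant supply of such fields, while the $\cO(Y)$-convexity of $K$ together with Stein separation is what allows the local ``pushing off $K$'' to be assembled into a single global holomorphic deformation rather than only a local or piecewise one.
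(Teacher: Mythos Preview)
Your plan is the ``approximate by an entire map, then push off $K$ by automorphisms'' strategy, and it runs into the obstruction that limited earlier work (Forstneri\v{c}--Ritter) to a restricted Oka property for sources of dimension strictly less than $\dim Y$. The gap is in the key lemma and in the induction feeding it. First, the proposed mechanism---complete vector fields ``transverse to $K$''---has no content when $K$ has nonempty interior (e.g.\ a closed ball), which is precisely the case of interest; no generic-position or perturbation argument lets a holomorphic map from $\C^n$ miss a full-dimensional compact set. Second, and more structurally, your induction requires $L\cup K$ to be $\cO(Y)$-convex for $L=(\Phi_{j-1}\circ g)(P_{j-1})$, and there is no mechanism to ensure this: the image of a polynomially convex set under a holomorphic map has an uncontrolled $\cO(Y)$-hull, and if that hull meets $K$ (picture $L$ a topological sphere enclosing the ball $K$) then no enlargement of $L$ \emph{inside} $Y\setminus K$ restores convexity of the union while keeping $L\cap K=\emptyset$. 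Without that convexity Anders\'{e}n--Lempert does not apply and the inductive step collapses.

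The paper sidesteps both issues by a genuinely different route. Instead of correcting a global map $g:\C^n\to Y$, it verifies convex ellipticity directly: for each $f:\Op Q\to Y\setminus K$ one pulls back to a fibered situation over $\Op Q$ and uses Anders\'{e}n--Lempert with first-jet interpolation along the section $\tilde f$ to manufacture a sequence of fiber-preserving automorphisms $\widetilde\varphi_j$ that (i) contract a fixed tube around $\tilde f$ with uniform bounds $\tfrac14\|w\|\le\|\cdot\|\le\tfrac34\|w\|$, and (ii) are $\varepsilon/2^j$-close to the identity on a fixed compact $\cO$-convex neighborhood $L$ of the pulled-back $K$. The non-autonomous attracting basin $\Omega$ of this sequence then contains $\tilde f(\Op Q)$ and is disjoint from the pulled-back $K$, and the Forn\ae ss--Wold theorem makes $\Omega$ elliptic, furnishing the required dominating global spray into $Y\setminus K$. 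The decisive point is that nothing is ever pushed off $K$: one instead builds an elliptic subdomain of $Y\setminus K$ around the image of $f$, and the $\cO$-convexity that Anders\'{e}n--Lempert needs (a small tube around $\tilde f$ union a small $\cO$-convex neighborhood of $K$) is available by construction rather than something to be propagated through an induction over uncontrolled growing images.
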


Here, a complex manifold $Y$ is said to have the \emph{density property} if the Lie algebra generated by all $\C$-complete holomorphic vector fields on $Y$ is dense in the Lie algebra of all holomorphic vector fields on $Y$.
The most typical examples are $\C^{n}$ $(n>1)$\footnote{In fact, there are no 1-dimensional Stein manifolds with the density property.}, which is a consequence of the classical Anders\'{e}n--Lempert theory \cite{Andersen1992} (see \cite[\S4.10]{Forstneric2017} and \cite{Forstneric2022}).
Thus, Theorem \ref{theorem:main} solves the above problem as follows.

\begin{corollary}
  \label{corollary:polynomial}
  For any compact polynomially convex set $K\subset\C^{n}$ $(n>1)$, the complement $\C^{n}\setminus K$ is Oka.
\end{corollary}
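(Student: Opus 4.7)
The plan is to derive the corollary as a direct specialization of Theorem \ref{theorem:main} to the ambient manifold $Y=\C^{n}$ for $n>1$. The work therefore reduces to verifying two hypotheses: that $\C^{n}$ is a Stein manifold enjoying the density property, and that polynomial convexity of a compact set $K\subset\C^{n}$ coincides with $\cO(\C^{n})$-convexity.

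The Stein property of $\C^{n}$ is immediate, and its density property for $n>1$ is the classical Andersén--Lempert theorem recalled in the opening paragraph of the introduction. For the identification of the two convexity notions, I would invoke the standard observation that every entire function $f\in\cO(\C^{n})$ is the sum of a globally convergent Taylor series, whose partial sums are polynomials converging to $f$ uniformly on every compact subset of $\C^{n}$. Consequently the polynomial hull $\hat{K}_{\mathrm{pol}}$ and the $\cO(\C^{n})$-hull $\hat{K}_{\cO(\C^{n})}$ agree for every compact $K\subset\C^{n}$, so the assumption that $K$ is polynomially convex is exactly the assumption ``compact $\cO(Y)$-convex'' required by Theorem \ref{theorem:main}. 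Applying the theorem then yields that $\C^{n}\setminus K$ is Oka.

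There is no real obstacle at this stage: the corollary is a pure hypothesis-verification exercise, with both substantive ingredients---the density property of $\C^{n}$ and the main theorem of the paper itself---already available. The entire content of the deduction lies in recognising that polynomial hulls and $\cO(\C^{n})$-hulls coincide, which in turn is nothing more than the global convergence of Taylor series on $\C^{n}$.
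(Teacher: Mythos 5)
Your proposal is correct and matches the paper's own (one-line) deduction: the paper likewise invokes the Andersén--Lempert density property of $\C^{n}$ for $n>1$ and applies Theorem \ref{theorem:main}, implicitly using the standard identification of the polynomial hull with the $\cO(\C^{n})$-hull. The only difference is that you spell out this last identification via uniform polynomial approximation of entire functions on compacts, which the paper takes as known.
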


The only examples of closed sets in $\C^n$ which have Oka complements so far were closed sets of positive codimension (closed complex subvarieties of codimension at least two \cite{Flenner2016,Forstneric2002,Gromov1989,Kusakabe2021a,Kusakabe2021b,Kusakabe2021}, closed complex hypersurfaces \cite{Hanysz2014,Kusakabe2021a} and closed countable sets of codimension at least two \cite{Kusakabe2020,Winkelmanna}).
Corollary \ref{corollary:polynomial} gives whole new examples of arbitrary codimension.
It also refines a restricted version of the Oka property proved by Forstneri\v{c} and Ritter \cite[Theorem 1]{Forstneric2014} (cf. \cite[Remark 1.3]{Forstneric2019}).
We note that, after this paper has appeared as a preprint, Forstneri\v{c} and Wold \cite{Forstneric2020a} obtained another proof of Theorem \ref{theorem:main} (see \cite{Forstnerica} for further developments).

Corollary \ref{corollary:polynomial} has the following immediate consequence since every $n$-dimensional connected Oka manifold is the image of a holomorphic map from $\C^{n}$ by the result of Forstneri\v{c} \cite[Theorem 1.1]{Forstneric2017a}.

\begin{corollary}
For any compact polynomially convex set $K\subset\C^{n}$ $(n>1)$, there exists a holomorphic map $f:\C^{n}\to\C^{n}$ such that $f(\C^{n})=\C^{n}\setminus K$.
\end{corollary}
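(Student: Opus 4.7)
The plan is to chain together the three results that the excerpt has already placed on the table: Theorem \ref{theorem:main} (applied to $Y=\C^n$), Corollary \ref{corollary:polynomial}, and Forstneri\v{c}'s cited surjectivity result \cite[Theorem 1.1]{Forstneric2017a}. The latter asserts that every connected $n$-dimensional Oka manifold is the image of some holomorphic map from $\C^n$. So the task reduces to verifying two hypotheses for $Y\setminus K=\C^n\setminus K$, namely that it is Oka and that it is connected, and then simply composing with the inclusion $\C^n\setminus K\hookrightarrow\C^n$.

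First I would invoke Corollary \ref{corollary:polynomial} to conclude that $\C^n\setminus K$ is an Oka manifold of pure complex dimension $n$. Second, I would verify that $\C^n\setminus K$ is connected. This is the only non-cosmetic step, and it is a classical consequence of polynomial convexity: if $U$ were a bounded connected component of $\C^n\setminus K$, then the maximum principle applied to an arbitrary holomorphic polynomial $p$ would give
\[
|p(z)|\leq\sup_{\partial U}|p|\leq\sup_{K}|p|\qquad\text{for every }z\in\overline{U},
\]
so that $\overline{U}\subset\widehat{K}=K$, contradicting $U\subset\C^n\setminus K$. Hence $\C^n\setminus K$ has a unique (unbounded) component and is connected.

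With these two facts, \cite[Theorem 1.1]{Forstneric2017a} produces a holomorphic map $g:\C^n\to\C^n\setminus K$ whose image is all of $\C^n\setminus K$. Setting $f$ to be the composition of $g$ with the open inclusion $\iota:\C^n\setminus K\hookrightarrow\C^n$ yields a holomorphic map $f:\C^n\to\C^n$ with $f(\C^n)=\iota(g(\C^n))=\C^n\setminus K$, as required.

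There is essentially no obstacle: the deep content is fully absorbed into Theorem \ref{theorem:main} and Forstneri\v{c}'s surjectivity theorem. The only point that might trip up a reader is the connectedness of $\C^n\setminus K$, since for a general compact set in $\C^n$ $(n\geq 2)$ the complement need not be connected (e.g.\ the unit sphere); polynomial convexity of $K$ is precisely what prevents bounded complementary components, via the one-line maximum-principle argument above.
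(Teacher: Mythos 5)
Your proof is exactly the paper's argument: the paper dispatches this corollary in one sentence by combining Corollary \ref{corollary:polynomial} with Forstneri\v{c}'s surjectivity theorem \cite[Theorem 1.1]{Forstneric2017a}, leaving the connectedness of $\C^n\setminus K$ implicit. Your explicit maximum-principle verification that polynomial convexity of $K$ forces connectedness of the complement is a correct and welcome addition, but the route is the same.
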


As another application of Theorem \ref{theorem:main}, we obtain new examples of nonelliptic Oka manifolds.
Gromov's main result in \cite{Gromov1989} was the implication from ellipticity to the Oka property.
In the same paper, he also proved the converse for Stein manifolds \cite[Remark 3.2.A]{Gromov1989}.
Then he asked a question whether the converse holds for \emph{all} complex manifolds \cite[Question 3.2.A$''$]{Gromov1989}.
This question was negatively answered in our previous paper by the nonelliptic Oka complement of a non-discrete compact countable set in $\C^{n}$ $(n\geq3)$ \cite[Corollary 1.4]{Kusakabe2020}.
The result of Andrist, Shcherbina and Wold \cite[Theorem 1.1]{Andrist2016} (see also \cite[Lemma 3.3]{Kusakabe2020}) and Theorem \ref{theorem:main} give the following new examples of nonelliptic Oka manifolds.

\begin{corollary}
\label{corollary:Gromov}
Let $Y$ be a Stein manifold with the density property and $K\subset Y$ be a compact $\cO(Y)$-convex set.
Assume that $\dim Y\geq 3$ and $K\subset Y$ has infinitely many accumulation points.
Then the complement $Y\setminus K$ is a nonelliptic Oka manifold.
\end{corollary}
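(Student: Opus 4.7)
The plan is to combine the main theorem of the paper with the two external results cited in the statement, namely Andrist--Shcherbina--Wold's Theorem 1.1 from \cite{Andrist2016} and the author's Lemma 3.3 from \cite{Kusakabe2020}. The two assertions (Oka, not elliptic) are handled separately.

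For the Oka part, I would simply invoke Theorem \ref{theorem:main}. The hypotheses on $Y$ and $K$ stated in Corollary \ref{corollary:Gromov} are precisely those required by Theorem \ref{theorem:main} (Stein with the density property, and compact $\cO(Y)$-convex), so the conclusion that $Y\setminus K$ is Oka is immediate and needs no extra argument.

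For the failure of ellipticity, the strategy is to produce a point (in fact, a whole set of points) of $Y\setminus K$ at which no holomorphic dominating spray can be submersive, thereby contradicting the definition of ellipticity. This is where the dimension hypothesis $\dim Y\geq 3$ and the assumption that $K$ has infinitely many accumulation points enter. The Andrist--Shcherbina--Wold theorem provides an analytic obstruction at accumulation points of $K$: roughly, any holomorphic vector field on $Y\setminus K$ must vanish along a substantial subset of accumulation points because it cannot be extended across $K$ in the required way when the codimension of the singular set is low enough. Lemma 3.3 of \cite{Kusakabe2020} then repackages this obstruction into exactly the form needed to rule out ellipticity, namely: if $s: E\to Y\setminus K$ were a dominating spray for a holomorphic vector bundle $E$, then the images of the fibre derivatives $\der s|_{E_y}$ would have to span $T_y(Y\setminus K)$ at every $y$, and the infinitely many accumulation points give a contradiction to this surjectivity requirement.

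The main obstacle is purely conceptual rather than technical: one must verify that the hypotheses of Corollary \ref{corollary:Gromov} (Stein with density property, $\cO(Y)$-convex compact $K$, dimension at least three, infinitely many accumulation points) match the hypotheses of the two cited results, since Andrist--Shcherbina--Wold was originally stated in a particular setting and Lemma 3.3 of \cite{Kusakabe2020} is phrased for countable compact sets in $\C^{n}$. The translation to the general Stein manifold setting with the density property is routine provided the local picture near the accumulation points is the same, so beyond that verification the proof is a short citation argument.
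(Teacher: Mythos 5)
Your high-level plan is exactly the one the paper uses: the Oka conclusion is an immediate application of Theorem \ref{theorem:main}, and the failure of ellipticity is delegated to Andrist--Shcherbina--Wold \cite[Theorem 1.1]{Andrist2016} together with the argument of \cite[Lemma 3.3]{Kusakabe2020}. (The paper does not write out a proof at all -- the sentence preceding the corollary is the entire argument, a pure citation, which is what you propose.) So the route matches.

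However, your description of how the two cited results interact is backwards in a way that would derail you if you tried to fill it in. The Andrist--Shcherbina--Wold result is a Hartogs \emph{extension} theorem for holomorphic vector bundles and sprays: when $\dim Y\geq 3$ and $K$ is compact, a holomorphic vector bundle on $Y\setminus K$ and a spray defined on it both extend holomorphically across $K$. It is not an obstruction saying that ``a holomorphic vector field must vanish because it cannot be extended''; that statement has the logic inverted, and the relevant objects are bundles and sprays, not vector fields. The actual mechanism (this is what \cite[Lemma 3.3]{Kusakabe2020} encodes) is: suppose $s\colon E\to Y\setminus K$ were a dominating spray. By ASW, $E$ extends to a bundle $\widetilde E\to Y$ and $s$ extends to $\widetilde s\colon\widetilde E\to Y$ with $\widetilde s(0_y)=y$. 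The locus where $\widetilde s$ fails to be a submersion along the fibres is a closed analytic subset of $Y$ contained in $K$, hence compact and therefore finite (as $Y$ is Stein). Since $K$ has infinitely many accumulation points, choose one, say $q$, where $\widetilde s|_{\widetilde E_q}$ is a submersion at $0_q$; then for $y\in Y\setminus K$ sufficiently near $q$, the image $s(E_y)$ contains a ball of fixed radius around $y$, which necessarily meets $K$ because $q$ is an accumulation point -- contradicting $s(E_y)\subset Y\setminus K$. The hypotheses $\dim Y\geq 3$ (for ASW) and ``infinitely many accumulation points'' (so that one survives the finite exceptional set) are used precisely where indicated. With this corrected picture, your verification step -- that the hypotheses of the cited results are met -- is indeed routine, and the proof closes.
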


The proof of Theorem \ref{theorem:main} is given in Section \ref{section:proof}.
In fact, we show the relative version (Theorem \ref{theorem:main_relative}) of Theorem \ref{theorem:main}.
In Section \ref{section:application}, we prove the following generalization of Corollary \ref{corollary:polynomial} by using this relative version.
Here, the polynomial hull $\widehat S$ of a closed set $S\subset\C^{n}$ is defined by $\widehat S=\bigcup_{j\in\N}\widehat S_{j}$ where $S=\bigcup_{j\in\N}S_{j}$ is an exhaustion of $S$ by compact sets.

\begin{theorem}
\label{theorem:tame}
Let $S\subset\C^{n}$ $(n>1)$ be a closed polynomially convex set (i.e. $S=\widehat S$).
Assume that for some $C>0$ there exists a holomorphic automorphism $\varphi$ of $\C^{n}$ such that
\begin{align*}
\varphi(S)\subset\left\{(z,w)\in\C^{n-2}\times\C^{2}:\|w\|\leq C(1+\|z\|)\right\}.
\end{align*}
Then the complement $\C^{n}\setminus S$ is Oka.
\end{theorem}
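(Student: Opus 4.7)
The plan is to derive Theorem \ref{theorem:tame} from the relative version Theorem \ref{theorem:main_relative} by combining it with an exhaustion of $S$ by compact polynomially convex sets that is well-controlled with respect to the Euclidean metric, thanks to the linear-growth hypothesis. Since being Oka is preserved under biholomorphism and $\varphi\in\Aut(\C^{n})$, we may assume $\varphi=\id$, so $S\subset\{(z,w)\in\C^{n-2}\times\C^{2}:\|w\|\leq C(1+\|z\|)\}$. If $n=2$ the first factor is trivial, so $S$ is bounded, hence compact and polynomially convex, and Corollary \ref{corollary:polynomial} applies directly. Assume henceforth $n\geq 3$. For each positive integer $j$ let $P_{j}\subset\C^{n}$ be the closed polydisc of polyradius $j$ in the first $n-2$ coordinates and $C(1+j)+1$ in the last two, and set $T_{j}=S\cap P_{j}$. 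Each $T_{j}$ is compact (as a closed subset of $P_{j}$) and polynomially convex (as the intersection of two polynomially convex sets), and the growth bound yields the crucial metric compatibility: for every $R>0$ there is $j_{0}$ such that $S\cap\overline{B}(0,R)\subset T_{j}$ for all $j\geq j_{0}$. In particular $\{T_{j}\}$ exhausts $S$.

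To verify that $\C^{n}\setminus S$ is Oka, I would check its convex approximation property directly. Given a compact convex set $L\subset\C^{\ell}$ and a holomorphic map $f$ from a neighborhood of $L$ into $\C^{n}\setminus S$, fix an exhaustion $L\subset L_{1}\subset L_{2}\subset\cdots$ of $\C^{\ell}$ by compact convex sets and inductively construct entire maps $g_{j}\colon\C^{\ell}\to\C^{n}$ satisfying $g_{j}(L_{j})\cap S=\emptyset$ and $\|g_{j}-g_{j-1}\|_{L_{j-1}}$ summably small. At the $j$-th step, choose $N=N(j)$ so that $T_{N}$ contains $S$ inside a ball that encloses $g_{j-1}(L_{j})$ with positive margin, and apply Theorem \ref{theorem:main_relative} to the complement $\C^{n}\setminus T_{N}$ (which is Oka by Theorem \ref{theorem:main}) to produce an entire $g_{j}\colon\C^{\ell}\to\C^{n}\setminus T_{N}$ that approximates $g_{j-1}$ on $L_{j-1}$ and, exploiting the relative/parametric structure of that theorem, remains within the chosen ball over the larger compact $L_{j}$. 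The choice of $N$ then forces $g_{j}(L_{j})\cap S=g_{j}(L_{j})\cap T_{N}=\emptyset$, and the sequence $\{g_{j}\}$ converges on compacta to an entire map $\C^{\ell}\to\C^{n}\setminus S$ approximating $f$ uniformly on $L$.

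The main obstacle lies squarely in the inductive step: preventing $g_{j}$ from wandering out of the chosen ball over $L_{j}\setminus L_{j-1}$, since the bare Oka property of $\C^{n}\setminus T_{N}$ only provides approximation on $L_{j-1}$ and gives no a priori estimate further out. This is precisely the reason Theorem \ref{theorem:main_relative}---with its built-in approximation-and-interpolation on larger compact sets---rather than Theorem \ref{theorem:main} alone, must do the heavy lifting. The role of the linear-growth hypothesis is then to guarantee that once $g_{j}(L_{j})$ is confined to a bounded region, avoiding the compact polynomially convex set $T_{N}$ in that region is equivalent to avoiding all of $S$, so that the compact-scale Oka approximation provided by Theorem \ref{theorem:main_relative} suffices.
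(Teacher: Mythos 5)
Your proposal correctly identifies that the linear-growth hypothesis is what lets the compact case be leveraged for the noncompact set $S$, but the inductive scheme you sketch has a genuine gap, and it is not the one the paper closes. You acknowledge the obstacle yourself: after approximating on $L_{j-1}$ by a map into $\C^{n}\setminus T_{N}$, you have no control over $g_{j}$ on $L_{j}\setminus L_{j-1}$, so $g_{j}(L_{j})$ could escape the ball over which $T_{N}$ absorbs $S$, and then $g_{j}(L_{j})\cap S=\emptyset$ does not follow. You assert that Theorem~\ref{theorem:main_relative}, ``with its built-in approximation-and-interpolation,'' resolves this, but it does not: that theorem is a statement about the Oka property of a holomorphic \emph{submersion} $\pi|_{Y\setminus S}\colon Y\setminus S\to B$ when $S$ is a family of compact holomorphically convex sets in the fibers; it offers no device for confining image sets to prescribed bounded regions, which is what your induction needs. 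As written, the induction cannot close.

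The paper's use of the growth condition is geometric and quite different. It observes that the bound $\|w\|\le C(1+\|z\|)$ forces the orthogonal projection $\pi\colon\C^{n}\to V$ onto any $(n-2)$-dimensional subspace $V$ sufficiently close to $\C^{n-2}\times\{0\}$ to restrict to a \emph{proper} map on $S$. Properness, together with polynomial convexity, is precisely what makes $S$ a ``family of compact holomorphically convex sets'' over $V$ in the sense of Section~\ref{section:proof}, so Theorem~\ref{theorem:main_relative} applies directly to give the Oka property of each submersion $\pi_{j}|_{\C^{n}\setminus S}\colon\C^{n}\setminus S\to V_{j}$. Choosing enough projections $\pi_{j}$ so that the fiber tangent spaces $\T_{z}\pi_{j}^{-1}(\pi_{j}(z))$ span $\T_{z}\C^{n}$ at every $z$, Lemma~\ref{lemma:span} then upgrades these Oka submersions to the Oka property of $\C^{n}\setminus S$ itself. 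You are missing both the properness-of-projections observation (the actual place where the growth hypothesis is used) and the spanning Lemma~\ref{lemma:span}, which is the mechanism that converts several Oka submersions into Okaness of the total space. Without these, the exhaustion $\{T_{j}\}$ you build does not by itself produce the result.
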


It is also a well-known problem whether the complement of a totally real affine subspace $\R^{k}\subset\C^{n}$ is Oka for $n>1$ and $1\leq k\leq n$ \cite[Problem 1.5]{Forstneric2015}.
As in the case of compact polynomially convex sets, Forstneri\v{c} and Wold proved that for $n>1$ and $1\leq k\leq n-1$ the complement $\C^{n}\setminus\R^{k}$ enjoys a restricted version of the Oka property \cite[Theorem 1.3]{Forstneric2015} (see also \cite{Kutzschebauch2018}).
Observe that
\begin{align*}
\C^{n}\setminus\R^{2k}&\cong\C^{n}\setminus\left\{(z,0,\ldots,0,\bar z):z\in\C^{k}\right\}, \\
\C^{n}\setminus\R^{2l+1}&\cong\C^{n}\setminus\left(\R\times\left\{(z,0,\ldots,0,\bar z):z\in\C^{l}\right\}\right),
\end{align*}
and the sets
\begin{align*}
\left\{(z,0,\ldots,0,\bar z):z\in\C^{k}\right\},\quad\R\times\left\{(z,0,\ldots,0,\bar z):z\in\C^{l}\right\}
\end{align*}
are contained in $\{(z,w)\in\C^{n-2}\times\C^{2}:\|w\|\leq(1+\|z\|)\}$ if $\max\{k,l+1\}\leq n-2$.
Note that the only exceptions are $\C^{2}\setminus\R$, $\C^{2}\setminus\R^{2}$ and $\C^{3}\setminus\R^{3}$.
Since every closed set $S$ in $\R^{n}\subset\C^{n}$ is polynomially convex (cf. \cite[p.\,3]{Stout2007}), the above observation and Theorem \ref{theorem:tame} imply the following corollary (see \cite[Proposition 4.9]{Forstnerica} for the case $(n,k)=(2,1)$).

\begin{corollary}
If $n>1$, then for any compact set $K$ in $\R^{n}\subset\C^{n}$ the complement $\C^{n}\setminus K$ is Oka.
If in addition $(n,k)\neq(2,1),(2,2),(3,3)$, then for any closed set $S$ in $\R^{k}\subset\C^{n}$ the complement $\C^{n}\setminus S$ is Oka.
In particular, the complement $\C^{n}\setminus\R^{k}$ of a totally real affine subspace is Oka if $n>1$ and $(n,k)\neq(2,1),(2,2),(3,3)$.
\end{corollary}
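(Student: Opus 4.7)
The plan is to reduce each of the three assertions to the author's previously established results, using the classical fact (cited from Stout) that every closed subset of $\R^{n} \subset \C^{n}$ is polynomially convex.

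For the first assertion, any compact $K \subset \R^{n} \subset \C^{n}$ is polynomially convex, and Corollary \ref{corollary:polynomial} immediately yields that $\C^{n} \setminus K$ is Oka for $n > 1$.

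For the second assertion, the goal is to verify the hypotheses of Theorem \ref{theorem:tame} for the closed polynomially convex set $S \subset \R^{k} \subset \C^{n}$. Writing $k = 2m$ or $k = 2l+1$, I would use the $\C$-linear automorphism of $\C^{2m}$ (respectively $\C^{2l}$) defined on each coordinate pair by $(z_{2j-1}, z_{2j}) \mapsto (z_{2j-1} + iz_{2j}, z_{2j-1} - iz_{2j})$ and by the identity on the remaining coordinates; it sends $\R^{2m}$ (respectively $\R^{2l}$) to $\{(\zeta, \bar\zeta) : \zeta \in \C^{m}\}$ (respectively $\{(\zeta, \bar\zeta) : \zeta \in \C^{l}\}$). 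Composing with a coordinate permutation produces an automorphism $\varphi$ of $\C^{n}$ placing $\varphi(\R^{k})$ in one of the forms $\{(\zeta, 0, \ldots, 0, \bar\zeta) : \zeta \in \C^{m}\}$ or $\R \times \{(\zeta, 0, \ldots, 0, \bar\zeta) : \zeta \in \C^{l}\}$ exhibited in the observation preceding the statement. The restriction $(n,k) \neq (2,1),(2,2),(3,3)$ is exactly the numerical condition $\max\{m, l+1\} \leq n-2$, which leaves room for a further permutation so that the final $\C^{2}$-factor $w$ carries two entries of the form $\bar\zeta_{i}$; in those coordinates one has $\|w\|^{2} \leq \|\zeta\|^{2} \leq \|z\|^{2}$, so
\[
\varphi(S) \subset \varphi(\R^{k}) \subset \{(z, w) \in \C^{n-2} \times \C^{2} : \|w\| \leq 1 + \|z\|\}.
\]
Theorem \ref{theorem:tame} then delivers that $\C^{n} \setminus S$ is Oka. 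The third assertion is the special case $S = \R^{k}$ of the second.

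No step presents a serious analytic obstacle: the automorphism is a fixed linear map composed with a permutation, polynomial convexity of closed subsets of $\R^{n}$ is classical, and the only care needed is the coordinate count — which is essentially the content of the displayed observation and precisely isolates the excluded triples $(2,1),(2,2),(3,3)$.
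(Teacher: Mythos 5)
Your proof is correct and takes essentially the same route as the paper: polynomial convexity of closed subsets of $\R^{n}$ (from Stout), the explicit $\C$-linear automorphism to the standard form $\{(\zeta,0,\ldots,0,\bar\zeta)\}$ (resp. $\R\times\{(\zeta,0,\ldots,0,\bar\zeta)\}$), the containment estimate that isolates the excluded triples $(2,1),(2,2),(3,3)$, and an application of Theorem \ref{theorem:tame}. Routing the compact case through Corollary \ref{corollary:polynomial} instead of Theorem \ref{theorem:tame} is a slight streamlining but the argument is the one the paper intends.
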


In the above results, we only consider the complements of holomorphically convex sets.
It can be easily seen that a domain in a complex manifold with a strongly pseudoconvex boundary point cannot be Oka (cf. \cite[Example 5]{Forstneric2014}).
Thus it is natural to ask whether the Oka property of the complement $\C^n\setminus S$ of a closed set $S$ implies polynomial convexity of $S$.
In fact, we can obtain the following negative answer to this question as an application of Theorem \ref{theorem:tame}.

\begin{corollary}
\label{corollary:scc}
For any rectifiable simple closed curve $C\subset\C^{n}$ $(n\geq3)$, the complement $\C^{n}\setminus C$ is Oka.
In particular, the complement $\C^{n}\setminus S^{1}$ of the unit circle $S^{1}$ in a complex line $\C\subset\C^{n}$ $(n\geq 3)$ is Oka.
\end{corollary}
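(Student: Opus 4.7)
The plan is to apply Theorem \ref{theorem:tame} to the polynomial hull $\widehat C$ of $C$ (the slab condition holds trivially since $\widehat C$ is compact) and then to transfer the Oka property across a codimension-$\geq 2$ analytic subvariety. Since the rectifiable simple closed curve $C\subset\C^n$ is compact, its polynomial hull $\widehat C$ is a compact polynomially convex subset of $\C^n$. Theorem \ref{theorem:tame} (applied with $\varphi=\id$ and any constant strictly larger than $\sup_{\widehat C}\|w\|$ in some splitting $\C^n\cong\C^{n-2}\times\C^2$) then yields that $\C^n\setminus\widehat{C}$ is Oka.

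Next, I would invoke the classical theorem of Stolzenberg (for piecewise $C^1$ curves) and Alexander (extending it to rectifiable curves): for a rectifiable simple closed curve $C\subset\C^n$, the set $V:=\widehat{C}\setminus C$ is either empty or a pure $1$-dimensional complex analytic subvariety of the complex manifold $X:=\C^n\setminus C$. If $V=\emptyset$ then $C=\widehat{C}$ is polynomially convex and the previous step already gives that $\C^n\setminus C$ is Oka. Otherwise $V$ is a closed complex analytic subvariety of $X$ of complex codimension $n-1\geq 2$ (using $n\geq 3$), and the complement $X\setminus V=\C^n\setminus\widehat{C}$ is Oka.

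To conclude, I would apply the Oka-theoretic principle that adjoining a closed complex analytic subvariety of codimension at least two to an Oka open subset of $\C^n$ preserves the Oka property. Concretely, any holomorphic map $f\colon U\to X$ defined near a convex compact $K\subset\C^m$ may be perturbed within $X$ (exploiting that $X$ is open in $\C^n$, via the dominating spray $(z,v)\mapsto f(z)+v$, enriched with polynomial perturbations of $v$ when $m$ is large) to a map whose image on $K$ is disjoint from $V$, and then approximated on $K$ by entire maps $\C^m\to X\setminus V\subset X$ using the Oka property of $X\setminus V$. This yields the Oka property of $\C^n\setminus C$. The main obstacle is rigorously executing this last step: a dimension count handles source dimensions $m\leq n-2$ via constant perturbations, but larger $m$ requires more delicate transversality arguments, exploiting the full Oka flexibility of $X\setminus V$ together with richer perturbation schemes to controllably move $f(K)$ off of $V$.
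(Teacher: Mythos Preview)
Your approach differs from the paper's and has a genuine gap at the final step. The paper never passes through the polynomial hull $\widehat C$. Instead, for each $p\in\C^n\setminus C$ it chooses (after a unitary change of coordinates) a complex hyperplane $H=\{0\}\times\C^{n-1}$ meeting $C$ in a single point $q$ and avoiding $p$; the arc $C\setminus\{q\}\subset\C^*\times\C^{n-1}$ is $\cO(\C^*\times\C^{n-1})$-convex (being exhausted by rectifiable arcs, which are polynomially convex), its preimage under the exponential covering $\C^n\to\C^*\times\C^{n-1}$ is polynomially convex and satisfies the hypothesis of Theorem~\ref{theorem:tame}, and the Oka property descends along the covering to $(\C^*\times\C^{n-1})\setminus C$. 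This is a Zariski open Oka neighborhood of $p$ in $\C^n\setminus C$, so the localization principle (Theorem~\ref{theorem:localization}) finishes the proof.

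Your first two steps are fine: $\C^n\setminus\widehat C$ is Oka already by Corollary~\ref{corollary:polynomial}, and by the Stolzenberg--Alexander theorem $V=\widehat C\setminus C$ is a (possibly empty) pure one-dimensional analytic subvariety of $X=\C^n\setminus C$. The gap is the last step. The principle you invoke---that if $X\setminus V$ is Oka and $V\subset X$ is closed analytic of codimension at least two then $X$ is Oka---is not proved in the paper, and it is not a standard theorem one can simply cite; the well-studied implication runs the other way. Your perturbation sketch handles only source dimensions $m\le n-2$, as you yourself acknowledge; for larger $m$ a generic constant (or polynomial) translation need not move $f(K)$ off $V$, and appealing to ``the full Oka flexibility of $X\setminus V$'' is circular, since that flexibility applies only to maps already landing in $X\setminus V$. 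The localization principle does not rescue the argument either, because points of $V$ have no evident Zariski open Oka neighborhood in $X$. Without an independent proof of this implication, the argument is incomplete precisely where the Oka property of $\C^n\setminus C$ is to be concluded.
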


The proof of Corollary \ref{corollary:scc} is given in Section \ref{section:application}.
In the same section, we give further applications of our results.

%
%

\section{Dominating sprays and the Oka property}
\label{section:spray}

In this section, we recall the notion of dominating sprays which plays a fundamental role in the proof of Theorem \ref{theorem:main}.
For a subset $A$ of a topological space $X$, let $\Op A$ denote a non-specified open neighborhood of $A$ in $X$.

\begin{definition}
\label{definition:spray}
Let $X$ be a (reduced) complex space, $\pi:Y\to B$ be a holomorphic submersion (between complex spaces) and $A\subset X$ be a subset.
\begin{enumerate}[leftmargin=*]
\item A \emph{(local) $\pi$-spray} over a holomorphic map $f:\Op A\to Y$ is a holomorphic map $s:\Op A\times W\to Y$ where $W\subset\C^N$ is an open neighborhood of $0$ such that $s(x,0)=f(x)$, $\pi\circ s(x,w)=\pi\circ f(x)$ for all $(x,w)\in\Op A\times W$.
Particularly in the case of $W=\C^{N}$, $s$ is also called a \emph{global $\pi$-spray}.
\item A $\pi$-spray $s:\Op A\times W\to Y$ is \emph{dominating} if $s(x,\cdot):W\to\pi^{-1}(\pi\circ s(x,0))$ is a submersion at $0$ for each $x\in A$.
\end{enumerate}
\end{definition}

The following variant of Gromov's ellipticity was introduced in our previous paper \cite{Kusakabe2021b}.

\begin{definition}
A holomorphic submersion $\pi:Y\to B$ is \emph{convexly elliptic} if there exists an open cover $\{U_{\alpha}\}_{\alpha}$ of $B$ such that for any compact convex set $K\subset\C^{n}$ $(n\in\N)$ and any holomorphic map $f:\Op K\to Y$ with $f(K)\subset \pi^{-1}(U_{\alpha})$ for some $\alpha$ there exists a dominating global $\pi$-spray over $f$.
\end{definition}

A holomorphic submersion is said to enjoy \emph{the Oka property} if it enjoys one (and hence all) of the equivalent Oka properties in \cite[Corollary 5.5]{Kusakabe2021b}.
For example, a holomorphic submersion $\pi:Y\to B$ enjoys the Oka property if and only if for any bounded convex domain $\Omega$, any compact convex set $K\subset\Omega\times\C^n$ $(n\in\N)$, any holomorphic map $F:\Omega\to B$ and any continuous map $f:\Omega\times\C^{n}\to Y$ such that $f|_{\Op K}$ is holomorphic and $\pi\circ f(z,w)=F(z)$ for all $(z,w)\in\Omega\times\C^n$ there exists a holomorphic map $\tilde f:\Omega\times\C^{n}\to Y$ which approximates $f$ uniformly on $K$ and satisfies $\pi\circ\tilde f(z,w)=F(z)$ for all $(z,w)\in\Omega\times\C^n$ (cf. \cite[Definition 5.1]{Kusakabe2021b}).
Thus a holomorphic submersion $Y\to*$ onto the singleton enjoys the Oka property if and only if $Y$ is Oka in the sense of Definition \ref{definition:Oka}.
The following characterization of the Oka property is the main theorem in \cite{Kusakabe2021b}.

\begin{theorem}[{cf. \cite[Theorem 2.2]{Kusakabe2021a} and \cite[Theorem 1.3 and Corollary 5.5]{Kusakabe2021b}}]
\label{theorem:elliptic_characterization}
A holomorphic submersion enjoys the Oka property if and only if it is convexly elliptic.
\end{theorem}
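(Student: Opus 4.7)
The statement asserts an equivalence, so the proof splits into two directions, one routine and one technical.

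For the easier direction (Oka property implies convex ellipticity), the plan is to produce the required dominating global $\pi$-spray over a given $f:\Op K\to Y$ (with $f(K)\subset\pi^{-1}(U_\alpha)$) by first constructing it locally and then globalizing. Since $\pi$ is a submersion, I can choose finitely many holomorphic vertical vector fields $V_1,\ldots,V_N$ defined on a neighborhood of $f(\Op K)$ in $\pi^{-1}(U_\alpha)$ whose values span the vertical tangent bundle at every point of $f(K)$. Flowing linear combinations of them produces a local dominating $\pi$-spray $s_0:\Op K\times W\to Y$ on some ball $W\subset\C^N$ about $0$, with $\pi\circ s_0(x,w)=\pi\circ f(x)$. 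I then extend $s_0$ to a continuous map $\tilde s:\Op K\times\C^N\to Y$ preserving this $\pi$-compatibility, and invoke the parametric Oka property of $\pi$ (with parameter space $\C^N$ and compact convex set $\overline W'\Subset W$) to approximate $\tilde s$ by a holomorphic global $\pi$-spray $s:\Op K\times\C^N\to Y$. A sufficiently uniform approximation on $K\times\overline W'$ preserves domination at $K\times\{0\}$, by the open condition on the vertical differential.

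For the harder direction (convex ellipticity implies the Oka property), the plan is the classical Gromov--Forstneri\v{c} iterative scheme in the parametric/relative form needed here. Given $F:\Omega\to B$ and a continuous $f:\Omega\times\C^n\to Y$ lifting $F$ which is holomorphic on $\Op K$, I would first subdivide $K$ into a finite collection of compact convex pieces $K_1,\ldots,K_m$ small enough that each $f(\Op K_j)$ lands in a single preimage $\pi^{-1}(U_{\alpha_j})$. Convex ellipticity then yields a dominating global $\pi$-spray $s_j:\Op K_j\times\C^N\to Y$ with $s_j(\cdot,0)=f|_{\Op K_j}$. Using such a spray, any nearby holomorphic map into $Y$ covering $F$ can be written as $s_j(\cdot,\gamma_j(\cdot))$ for a small holomorphic section $\gamma_j$ into the trivial bundle $\C^N$; this linearizes the gluing problem. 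The core technical step is a Cartan-type splitting lemma applied inductively to the Cartan pairs $(K_1\cup\cdots\cup K_j,\,K_{j+1})$: the difference between two local modifications is pulled back through the sprays to a holomorphic map into $\C^N$, split additively by Cartan's lemma, and pushed forward again. An exhaustion of $\Omega\times\C^n$ by such convex compacta, combined with geometric control of the approximation errors, gives in the limit a holomorphic $\tilde f:\Omega\times\C^n\to Y$ approximating $f$ on $K$ and lifting $F$.

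The main obstacle is this second direction, specifically the coherent management of the many local spray trivializations while maintaining the condition $\pi\circ\tilde f=F$ throughout the iteration. The Cartan splitting must be carried out with parameters, uniform estimates, and preservation of the fibration structure, and the approximation losses at each step must sum to something arbitrarily small on $K$. Convexity of $K$ enters crucially both in the existence of Cartan pairs and in Cartan's additive splitting for maps into $\C^N$. Both directions together are exactly the content of \cite[Theorem 1.3 and Corollary 5.5]{Kusakabe}, so in the present paper this theorem is used as a black box.
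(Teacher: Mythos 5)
This theorem is not proved in the present paper: it is cited wholesale from the author's earlier works \cite{Kusakabe2020a} and \cite{Kusakabe}, exactly as you note in your final sentence, so there is no in-paper argument to compare against.

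Your two-direction sketch is in the right spirit, but the easy direction as written has a small gap. The Oka property, in the formulation recalled in Section~\ref{section:spray}, is a pure approximation statement with no jet interpolation along $\{w=0\}$, so approximating the continuous extension $\tilde s$ by a holomorphic $\hat s$ does not by itself return a spray with $\hat s(\cdot,0)=f$; you only get $\hat s(\cdot,0)$ close to $f$ on $K$. To finish, one needs either an interpolation-enhanced version of the Oka property (fixing the core $\{w=0\}$), or a correction step: write $\hat s(x,0)=\iota(x,u(x))$ using the biholomorphic local spray $\iota$ of Lemma~\ref{lemma:tubular_neighborhood} over $f$ (applied to the pullback submersion $(\pi\circ f)^{*}\pi$), solve for a small holomorphic shift that returns the core to $f$, and recompose. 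Without this the ``dominating global $\pi$-spray over $f$'' you produce is only a spray over an approximation of $f$. The hard direction compresses the full Gromov--Forstneri\v{c} scheme (noncritical extension, gluing across Cartan pairs via composed sprays and Cartan splitting, passage from a CAP-type statement to the full parametric Oka property) into a short paragraph, which is acceptable given that the present paper uses the equivalence as a black box and refers the reader to \cite{Kusakabe2020a,Kusakabe} for the actual proof.
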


In the proof of Theorem \ref{theorem:main}, we also need the following fact which ensures the existence of biholomorphic local sprays.
Here, $\B^{N}$ denotes the open unit ball in $\C^{N}$.

\begin{lemma}[{cf. \cite[Lemma 5.10.4]{Forstneric2017}}]
\label{lemma:tubular_neighborhood}
Assume that $K\subset\C^{n}$ is a compact convex set, $\pi:Y\to\Op K$ is a holomorphic submersion and $f:\Op K\to Y$ is a holomorphic section of $\pi$.
Then there exists a biholomorphic local $\pi$-spray $\iota:\Op K\times\B^{N}\to\iota(\Op K\times\B^{N})\subset Y$ over $f$.
\end{lemma}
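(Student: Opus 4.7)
The plan is to produce $\iota$ by exponentiating a frame of vertical holomorphic vector fields defined near $f(K)$, effectively constructing a relative tubular neighborhood of the section $f(\Op K) \subset Y$ that respects the projection $\pi$. After shrinking $\Op K$ to an open convex (hence Stein and contractible) neighborhood of $K$, the pullback bundle $E := f^{*}(\ker \der\pi)$ is a holomorphic vector bundle of rank $N := \dim Y - n$ on $\Op K$, and by Cartan's Theorem B it admits a holomorphic frame. Equivalently, I obtain vertical holomorphic vector fields $\eta_{1}, \dots, \eta_{N}$ defined along $f(\Op K)$ whose values span the vertical tangent space at each point of $f(\Op K)$. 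To exponentiate these I must first extend them to a full neighborhood of $f(K)$ in $Y$: covering $f(K)$ by finitely many submersion charts $\Psi_{j}\colon W_{j} \xrightarrow{\sim} U_{j} \times V_{j} \subset \C^{n} \times \C^{N}$ with $\Psi_{j}(f(x)) = (x,0)$, I extend each $\eta_{i}|_{f(\Op K)\cap W_{j}}$ to all of $W_{j}$ through $\Psi_{j}$ by making it independent of the vertical coordinate, and then patch these local extensions into globally defined vertical fields $\tilde\eta_{1}, \dots, \tilde\eta_{N}$ on an open set $\Omega \supset f(K)$ in $Y$ using Cartan's splitting for the $GL_{N}(\C)$-valued holomorphic cocycle recorded on the overlaps.

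With the frame at hand, I would set
\[
\sigma(x, w_{1}, \dots, w_{N}) := \phi^{w_{N}}_{\tilde\eta_{N}} \circ \cdots \circ \phi^{w_{1}}_{\tilde\eta_{1}}(f(x)),
\]
where $\phi^{t}_{\tilde\eta_{i}}$ is the local flow of $\tilde\eta_{i}$. Compactness of $K$ together with the globality of the $\tilde\eta_{i}$ on $\Omega$ yield a common radius $\delta > 0$ for which $\sigma$ is holomorphic on $\Op K \times \delta\B^{N}$. Verticality of the $\tilde\eta_{i}$ gives $\pi\circ\sigma(x,w) = \pi(f(x)) = x$, so $\sigma$ is a local $\pi$-spray with $\sigma(x,0) = f(x)$; the partial derivatives $\partial\sigma/\partial w_{i}$ at $w=0$ recover the frame $(\eta_{i})$, so the inverse function theorem (uniform in $x \in K$ by compactness) allows me to shrink $\delta$ further so that $\sigma$ is a biholomorphism onto its image on $\Op K \times \delta\B^{N}$. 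Rescaling, $\iota(x,w) := \sigma(x, \delta w)$ is the desired biholomorphic local $\pi$-spray on $\Op K \times \B^{N}$.

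The hard part is the patching of the local extensions in the first paragraph: the different submersion charts produce different extensions of the $\eta_{i}$ off the section, and gluing them into genuine global vertical vector fields near $f(K)$ is where Cartan's splitting of matrix-valued holomorphic cocycles enters, relying essentially on the convexity of $K$. Everything else — the flow construction, the inverse function theorem, and the rescaling — is routine bookkeeping around the compactness of $K$.
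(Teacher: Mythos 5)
Your overall skeleton is the standard one behind \cite[Lemma 5.10.4]{Forstneric2017} — trivialize the vertical bundle along the section, extend a vertical frame to a neighborhood, flow, and invoke the inverse function theorem with compactness of $K$ — but the patching step, which you correctly flag as the crux, has a genuine gap. Splitting the cocycle you describe is a cohomology-vanishing statement on a \emph{neighborhood of $f(K)$ in $Y$}, and the convexity of $K$ by itself gives you Stein-ness only of the base $\Op K$, not of anything on the $Y$ side. What you need is that $f(\Op K)$ — a closed complex submanifold of $Y$ biholomorphic to the (shrunken, convex, hence Stein) set $\Op K$ — admits an open Stein neighborhood $V\subset Y$; this is Siu's Stein neighborhood theorem, and it is the missing ingredient that makes any sheaf-cohomological patching legal. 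Moreover, the cocycle in question is not $GL_{N}(\C)$-valued: the mismatches $\eta_i^{(j)}-\eta_i^{(k)}$ between two local extensions are \emph{additive}, taking values in the coherent sheaf $\mathcal I_{f(\Op K)}\cdot\mathcal{VT}$ of vertical vector fields vanishing on the section, so what you want is the vanishing of $H^1(V,\mathcal I_{f(\Op K)}\cdot\mathcal{VT})$, i.e.\ Cartan's Theorem~B, not Cartan's splitting lemma for matrix-valued cocycles (that would be the tool if you were gluing local product charts à la Docquier--Grauert, which is a different — though also viable — route).

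Once you have the Stein neighborhood $V$, you can skip the Čech bookkeeping entirely: the restriction sequence
\begin{equation*}
0\longrightarrow \mathcal I_{f(\Op K)}\cdot\mathcal{VT}\longrightarrow \mathcal{VT}\longrightarrow \mathcal{VT}|_{f(\Op K)}\longrightarrow 0
\end{equation*}
together with Theorem~B on $V$ shows that each $\eta_i$ extends in one stroke to a vertical holomorphic vector field $\tilde\eta_i$ on $V$, with no overlaps to reconcile. Two smaller points of attribution: the triviality of $E=f^{*}(\ker\der\pi)$ over a convex (hence contractible Stein) $\Op K$ is Grauert's Oka principle (or a direct argument for convex domains), not Cartan's Theorem~B, which gives surjectivity/vanishing for coherent sheaves but not triviality of bundles; and you should say explicitly that $f(\Op K)$ is \emph{closed} in $Y$ (because $\pi\circ f=\id$), as this is what lets Siu's theorem apply. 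With these repairs the flow construction and the uniform inverse function theorem over the compact $K$ go through exactly as you describe.
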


%
%

\section{The density property}
\label{section:density}

In this section, we recall the definition of the density property for holomorphic submersions and the relative Anders\'{e}n--Lempert theorem to prove the relative version (Theorem \ref{theorem:main_relative}) of Theorem \ref{theorem:main}.
The following definition was introduced by Andrist and Kutzschebauch \cite{Andrist2018}.

\begin{definition}[{Andrist--Kutzschebauch \cite[Definition 1.2]{Andrist2018}}]
A holomorphic submersion $\pi:Y\to B$ enjoys the \emph{(fibered) density property} if the Lie algebra generated by all $\C$-complete holomorphic vector fields on $Y$ tangent to the fibers of $\pi$ is dense in the Lie algebra of all holomorphic vector fields on $Y$ tangent to the fibers of $\pi$ with respect to the compact-open topology.
\end{definition}

\begin{example}
\label{example:density}
Assume that $Y$ is a Stein manifold with the density property and $B$ is a Stein space.
Then the trivial bundle $B\times Y\to B$ enjoys the density property by \cite[Lemma 3.5]{Varolin2001}.
\end{example}

We can easily see that the density property is stable under pullback.

\begin{lemma}
\label{lemma:pullback}
Assume that $X$ and $Y$ are Stein spaces, $\pi:Y\to B$ is a holomorphic submersion with the density property and $f:X\to B$ is a holomorphic map.
Then the pullback submersion $f^{*}\pi:f^{*}Y\to X$ enjoys the density property.
\end{lemma}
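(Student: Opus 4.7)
The plan is to transport the density property from $\pi$ to its pullback $f^{*}\pi$ by identifying fiber-tangent vector fields on $f^{*}Y$ with global sections of a pullback sheaf, reducing the approximation problem to one on the base $Y$, and then absorbing coefficient functions via a Leibniz identity. Since $\pi$ is a submersion, $f^{*}Y$ is a closed Stein subspace of $X\times Y$, and the relative tangent sheaf of $f^{*}\pi$ is canonically the pullback $p^{*}\cV$ of the relative tangent sheaf $\cV=\ker d\pi$ of $\pi$, where $p\colon f^{*}Y\to Y$ is the projection. I would fix a compact set $K\subset f^{*}Y$ together with a holomorphic vector field $W$ on $f^{*}Y$ tangent to the fibers of $f^{*}\pi$, and aim to approximate $W$ uniformly on $K$ by a Lie polynomial in $\C$-complete fiber-tangent vector fields on $f^{*}Y$.

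Cartan's Theorem~A on the Stein space $Y$ supplies finitely many global sections $V_{1},\dots,V_{N}\in\Gamma(Y,\cV)$ generating $\cV$, and Cartan's Theorem~B on the Stein space $f^{*}Y$ then yields $g_{1},\dots,g_{N}\in\cO(f^{*}Y)$ with $W=\sum_{i}g_{i}\,p^{*}V_{i}$. Extending each $g_{i}$ to $\cO(X\times Y)$ via Cartan extension and approximating on $K$ by finite sums of elementary tensors from $\cO(X)\otimes\cO(Y)$ (which is dense in $\cO(X\times Y)$ in the compact-open topology), the problem reduces to approximating vector fields of the form $(h\circ f^{*}\pi)\cdot p^{*}(kV)$ with $h\in\cO(X)$, $k\in\cO(Y)$, and $V\in\Gamma(Y,\cV)$.

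For each such term, the density property of $\pi$ provides a Lie polynomial $P(U_{1},\dots,U_{m})$ in $\C$-complete fiber-tangent vector fields $U_{l}$ on $Y$ approximating $kV$ on $p(K)$. Each pullback $p^{*}U_{l}$, given on $f^{*}Y\subset X\times Y$ by $(x,y)\mapsto(0,U_{l}(y))$, is $\C$-complete with flow $(x,y)\mapsto(x,\phi^{U_{l}}_{t}(y))$ and is tangent to the fibers of $f^{*}\pi$, and $p^{*}$ commutes with Lie brackets of fiber-tangent vector fields. Since $h\circ f^{*}\pi$ is annihilated by every fiber-tangent vector field on $f^{*}Y$, the Leibniz identity $[(h\circ f^{*}\pi)U',U''] = (h\circ f^{*}\pi)[U',U'']$ lets one absorb $h\circ f^{*}\pi$ into a single leaf of each Lie monomial; the resulting leaf $(h\circ f^{*}\pi)\,p^{*}U_{l}$ remains $\C$-complete (its flow is $(x,y)\mapsto(x,\phi^{U_{l}}_{t\,h(x)}(y))$, defined for all $t$) and fiber-tangent. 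Summing the resulting Lie polynomials over $i,j$ produces the desired approximation of $W$ on $K$.

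The main obstacle is the decomposition step: realising a global section of $p^{*}\cV$ as a finite $\cO(f^{*}Y)$-linear combination of the pullbacks $p^{*}V_{i}$ and then separating the scalar coefficients into elementary tensors in $\cO(X)\otimes\cO(Y)$. This is where the Stein hypotheses on $X$ and $Y$ are essential, through Cartan's Theorems~A and~B together with the density of the algebraic tensor product inside $\cO(X\times Y)$. Everything after this reduction is routine bookkeeping, exploiting the single key fact that functions pulled back along $f^{*}\pi$ are killed by fiber-tangent vector fields on $f^{*}Y$.
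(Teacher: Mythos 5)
Your proof is correct and takes essentially the same route as the paper's: the paper extends the fiber-tangent field on $f^{*}Y$ to a fiber-tangent field on the Stein product $X\times Y$ via Cartan's Theorem~B and then cites the argument of Varolin's Lemma~3.5, which is precisely the decomposition--tensor-approximation--Leibniz-absorption reduction to the density property of $\pi$ that you have spelled out inline. The only cosmetic difference is that you decompose the field on $f^{*}Y$ and extend only the scalar coefficients, whereas the paper extends the vector field itself before invoking the Varolin argument; both rest on the same Stein-theoretic facts and the same Leibniz trick.
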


\begin{proof}
Note that we have a pullback diagram:
\begin{center}
\begin{tikzcd}
f^{*}Y \arrow[d, "f^{*}\pi"] \arrow[rr, "{(f^{*}\pi,\pi^{*}f)}"] & \  & X\times Y \arrow[d, "\id_{X}\times\pi"] \\
X \arrow[rr, "{(\id_{X},f)}"] & & X\times B  \arrow[ul, phantom, "\ulcorner", very near start]
\end{tikzcd}
\end{center}
Let $V$ be a holomorphic vector field on $f^{*}Y$ tangent to the fibers of $f^{*}\pi$.
Since $(f^{*}\pi,\pi^{*}f):f^{*}Y\to X\times Y$ is a proper holomorphic embedding and $X\times Y$ is Stein, there exists a holomorphic vector field $W$ on $X\times Y$ tangent to the fibers of $\id_{X}\times\pi$ such that $(f^{*}\pi,\pi^{*}f)^{*}W=V$.
Then the argument in the proof of \cite[Lemma 3.5]{Varolin2001} implies that $W$ can be approximated by Lie combinations of $\C$-complete holomorphic vector fields on $X\times Y$ tangent to the fibers of $\id_{X}\times\pi$.
Thus the holomorphic vector field $V=(f^{*}\pi,\pi^{*}f)^{*}W$ can also be approximated by Lie combinations of $\C$-complete holomorphic vector fields on $f^{*}Y$ tangent to the fibers of $f^{*}\pi$.
\end{proof}

The following is the relative Anders\'{e}n--Lempert theorem proved by Andrist and Kutzschebauch \cite{Andrist2018}.
It is slightly different from \cite[Theorem 1.4]{Andrist2018}, but it also holds because a holomorphically convex set in a Stein space admits a basis of open Runge neighborhoods.

\begin{theorem}[{cf. Andrist--Kutzschebauch \cite[Theorem 1.4]{Andrist2018}}]
\label{theorem:Andersen-Lempert}
Let $Y$ be a Stein manifold, $\pi:Y\to B$ be a holomorphic submersion with the density property, $U\subset Y$ be an open set and $\varphi_{t}:U\to Y$ $(t\in[0,1])$ be a fiber-preserving $\cC^{1}$-isotopy of injective holomorphic maps such that $\varphi_{0}$ is the inclusion $U\hookrightarrow Y$.
Assume that $K\subset U$ is a compact set such that $\varphi_{t}(K)$ is $\cO(Y)$-convex for every $t\in[0,1]$.
Then there exists a fiber-preserving homotopy $\widetilde\varphi_{t}:Y\to Y$ $(t\in[0,1])$ of holomorphic automorphisms such that $\widetilde\varphi_{0}=\id_{Y}$ and $\widetilde\varphi_{t}$ approximates $\varphi_{t}$ uniformly on $K$ for every $t\in[0,1]$.
\end{theorem}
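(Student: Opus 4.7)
The plan is to reduce the statement directly to the original theorem of Andrist and Kutzschebauch \cite[Theorem 1.4]{Andrist2018}. The only discrepancy in the hypotheses is that here the isotopy $\varphi_{t}$ is merely assumed to be defined on an arbitrary open neighborhood $U$ of $K$, whereas the original formulation needs it on an $\cO(Y)$-convex (i.e.\ Runge) open neighborhood. The remark preceding the statement already flags the bridge: every $\cO(Y)$-convex compact set in a Stein space admits a basis of open Runge neighborhoods.

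Concretely, since $\varphi_{0}=\id_{U}$, the hypothesis forces $K=\varphi_{0}(K)$ to be $\cO(Y)$-convex. Using continuity of $(t,x)\mapsto\varphi_{t}(x)$ on $[0,1]\times U$ together with compactness of $[0,1]$, I would first fix an open neighborhood $U_{0}$ of $K$ with $\overline{U_{0}}\subset U$ on which every $\varphi_{t}$ remains defined and injective. Then, invoking the basis of open Runge neighborhoods of $K$, I would pick an $\cO(Y)$-convex open set $V$ with $K\subset V\subset U_{0}$. Restricting $\varphi_{t}$ to $V$ yields an isotopy satisfying verbatim the hypotheses of the original Andrist--Kutzschebauch theorem, from which the desired fiber-preserving homotopy $\widetilde\varphi_{t}$ of holomorphic automorphisms of $Y$ is produced.

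For orientation, I would also recall the mechanism of the underlying Andrist--Kutzschebauch result: partition $[0,1]$ into short subintervals $[t_{j},t_{j+1}]$ so that $\varphi_{t_{j+1}}\circ\varphi_{t_{j}}^{-1}$ is close to the identity on $\varphi_{t_{j}}(K)$ and realized as the time-$1$ map of a $\pi$-fiber-tangent time-dependent holomorphic vector field in a Runge neighborhood of $\varphi_{t_{j}}(K)$. Using $\cO(Y)$-convexity of $\varphi_{t_{j}}(K)$ together with the fibered density property, such a vector field is approximated by Lie combinations of $\C$-complete fiber-tangent holomorphic vector fields on $Y$; composing their time-$1$ flows produces a near-identity fiber-preserving automorphism of $Y$ approximating $\varphi_{t_{j+1}}\circ\varphi_{t_{j}}^{-1}$ on $\varphi_{t_{j}}(K)$. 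Telescoping these across $j$ assembles the homotopy $\widetilde\varphi_{t}$.

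The main obstacle, and the reason the basis-of-Runge-neighborhoods fact is indispensable, will be controlling error accumulation under this telescoping composition: at each step the approximation must hold not merely on $\varphi_{t_{j}}(K)$ but on a slightly larger $\cO(Y)$-convex compact neighborhood, so that the next step's hypothesis is still met. The Runge-neighborhood fact provides exactly the necessary room, and by continuity of the isotopy together with compactness of $[0,1]$ this enlargement can be arranged uniformly in $t$.
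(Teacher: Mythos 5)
The paper itself offers no detailed proof of this statement: it cites Andrist--Kutzschebauch \cite[Theorem 1.4]{Andrist2018} and appends a one-sentence remark that the modified hypotheses are admissible ``because a holomorphically convex set in a Stein space admits a basis of open Runge neighborhoods.'' Your proposal is a fair attempt to unpack that remark, but the single-step reduction in your first paragraph has a gap. The original Andrist--Kutzschebauch theorem assumes that the \emph{images} $\varphi_t(\Omega)$ of the entire domain of the isotopy are Runge open in $Y$, not merely that the domain is Runge. Replacing $U$ by a Runge open $V$ with $K\subset V\subset U$ only fixes the domain; it gives no control whatsoever on $\varphi_t(V)$ for $t>0$, and Runge-ness is not preserved under $\mathcal{C}^1$-small perturbations. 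So the restricted isotopy $\varphi_t|_V$ does \emph{not} in general satisfy the hypotheses of the original theorem verbatim, and the reduction as you state it does not go through.

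What you describe ``for orientation'' in the second paragraph is in fact the actual argument, and it should be promoted to the main line of the proof: one re-runs the Andrist--Kutzschebauch proof rather than formally invoking its statement. Subdivide $[0,1]$ into intervals $[t_j,t_{j+1}]$ so short that $\varphi_{t_{j+1}}\circ\varphi_{t_j}^{-1}$ is close to the identity on a neighborhood of $\varphi_{t_j}(K)$. Because $\varphi_{t_j}(K)$ is $\mathcal{O}(Y)$-convex, the Runge-neighborhood-basis fact yields a Runge open $V_j\supset\varphi_{t_j}(K)$ on which the short-time isotopy is generated by a fiber-tangent, time-dependent holomorphic vector field; the fibered density property then approximates this field (on $V_j$) by Lie combinations of $\C$-complete fiber-tangent fields on $Y$, whose time-one flows compose to a fiber-preserving automorphism of $Y$ close to $\varphi_{t_{j+1}}\circ\varphi_{t_j}^{-1}$ on $\varphi_{t_j}(K)$. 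Telescoping yields $\widetilde\varphi_t$. This is precisely where the Runge-neighborhood-basis fact enters --- to furnish a Runge domain around each $\varphi_{t_j}(K)$ on which the vector-field approximation can be carried out --- rather than, as your third paragraph suggests, chiefly as a device for absorbing error accumulation; error control is handled separately by choosing the $\varepsilon_j$ at each stage to sum appropriately, as in the classical Anders\'en--Lempert scheme.
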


In the proof of the relative version (Theorem \ref{theorem:main_relative}) of Theorem \ref{theorem:main}, we also need the jet interpolation condition (see \cite{Ramos-Peon2019} for some related results).
Here, a compact subset of a complex space is called a \emph{Stein compact} if it admits a basis of open Stein neighborhoods, and $\j^{k}_{y}\varphi$ denotes the $k$-jet of $\varphi$ at $y$.

\begin{corollary}
\label{corollary:Andersen-Lempert_jet}
Let $\pi:Y\to B$, $K\subset U\subset Y$ and $\varphi_{t}:U\to Y$ $(t\in[0,1])$ be as in Theorem \ref{theorem:Andersen-Lempert}.
Assume that $L\subset B$ is a Stein compact, $f:\Op L\to Y$ is a holomorphic section of $\pi$ such that $f(L)\subset K^{\circ}$, and $\varphi_{t}(K)$ is $\cO(Y)$-convex for every $t\in[0,1]$.
Then for any nonnegative integer $k$ there exists a fiber-preserving homotopy $\widetilde\varphi_{t}:\pi^{-1}(\Op L)\to\pi^{-1}(\Op L)$ $(t\in[0,1])$ of holomorphic automorphisms such that
\begin{enumerate}
\item $\widetilde\varphi_{0}=\id_{\pi^{-1}(\Op L)}$,
\item $\widetilde\varphi_{t}$ approximates $\varphi_{t}$ uniformly on $K\cap\pi^{-1}(\Op L)$ for every $t\in[0,1]$, and
\item $\j^{k}_{f(b)}(\widetilde\varphi_{t}|_{\pi^{-1}(b)})=\j^{k}_{f(b)}(\varphi_{t}|_{\pi^{-1}(b)})$ for all $b\in\Op L$ and $t\in[0,1]$.
\end{enumerate}
\end{corollary}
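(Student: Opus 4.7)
The plan is to carry out the proof of Theorem~\ref{theorem:Andersen-Lempert} while building the $k$-jet interpolation along $f(\Op L)$ directly into each step of the Lie-algebraic approximation. Since $L$ is a Stein compact in the Stein space $B$, we may shrink $\Op L$ to a Stein open Runge neighborhood of $L$; by Lemma~\ref{lemma:pullback} the restricted submersion $\pi^{-1}(\Op L)\to\Op L$ then has the fibered density property, and $f(\Op L)$ is a closed complex submanifold of the Stein manifold $\pi^{-1}(\Op L)$ contained in $K^{\circ}$.

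The argument proceeds, as in the proof of Theorem~\ref{theorem:Andersen-Lempert}, by subdividing $[0,1]$ into short intervals, writing each incremental isotopy $\varphi_{t_{j}}\circ\varphi_{t_{j-1}}^{-1}$ as the time-one flow of a fiber-tangent holomorphic vector field $V_{j}$ defined near $\varphi_{t_{j-1}}(K)$, approximating $V_{j}$ by a Lie combination of $\C$-complete fiber-tangent vector fields, and composing the resulting time-one flows. To enforce condition~(3), the approximation step is sharpened so that the approximating Lie combination has the same $(k+1)$-jet as $V_{j}$ along $f(\Op L)$ in the fiber directions. This \emph{jet-interpolating fibered density property} I would prove by splitting $V_{j}$ into a piece that realizes the prescribed jet along $f(\Op L)$---built from a polynomial jet extension in local fiber coordinates around $f(\Op L)$ and integrated to a fiber-preserving biholomorphism of an open neighborhood of $f(\Op L)$ matching the jet exactly---plus a remainder vanishing to order $k+1$ on $f(\Op L)$, which is then approximated by Lie combinations of $\C$-complete fiber-tangent vector fields that also vanish to order $k+1$ on $f(\Op L)$.

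The main obstacle is justifying this last approximation. By Cartan's Theorem~A applied to the ideal sheaf $\cI_{f(\Op L)}^{N}$ on the Stein manifold $\pi^{-1}(\Op L)$, the remainder can be written as $\sum_{i}h_{i}W_{i}$ with $h_{i}\in\cI_{f(\Op L)}^{N}$ and $W_{i}$ fiber-tangent. Approximating each $W_{i}$ by a Lie combination of $\C$-complete fiber-tangent vector fields via the fibered density property, the Leibniz identity $[hX,Y]=h[X,Y]-Y(h)X$ shows that every iterated bracket produced from fields vanishing to sufficiently high order retains vanishing of order $\geq k+1$ on $f(\Op L)$, provided $N$ is chosen larger than $k$ plus the maximum bracket depth appearing in the approximation. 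Iterating this enhanced approximation step through the time-subdivision produces a homotopy $\widetilde\varphi_{t}$ of fiber-preserving automorphisms of $\pi^{-1}(\Op L)$ satisfying (1), (2), and (3), where (1) holds by construction, (2) is inherited from Theorem~\ref{theorem:Andersen-Lempert} applied fiberwise on $\pi^{-1}(\Op L)$, and (3) follows because the jet-preserving property is preserved at each stage of the subdivision.
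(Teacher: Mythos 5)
Your proof takes a genuinely different route from the paper, and unfortunately has a gap that I don't see how to close as sketched.

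The paper's proof does \emph{not} re-derive a jet-interpolating version of the Andersen--Lempert approximation. Instead it quotes \cite[Lemma 7]{Ramos-Peon2019} as a black box: there exist $\C$-complete fiber-tangent fields $V_{1},\ldots,V_{N}$ whose composed flows give a global $\pi$-spray $s$ over the identity such that $w\mapsto\j^{k}_{f(b)}(s(\cdot,w)|_{\pi^{-1}(b)})$ is submersive at $0$ for all $b\in\Op L$. One then applies Theorem~\ref{theorem:Andersen-Lempert} as is to get automorphisms $\Phi_{t}$ with $\Phi_{t}^{-1}\circ\varphi_{t}$ close to the identity near $f(L)$, uses the submersive jet map and the Oka--Weil theorem on the Stein compact $L$ to solve for $g_{t}:\Op L\to\C^{N}$ near $0$ with $\j^{k}_{f(b)}(s(\cdot,g_{t}(b))|_{\pi^{-1}(b)})=\j^{k}_{f(b)}(\Phi_{t}^{-1}\circ\varphi_{t}|_{\pi^{-1}(b)})$, and finally sets $\widetilde\varphi_{t}=\Phi_{t}\circ s\circ(\id,g_{t}\circ\pi)$. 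The jet correction is done \emph{after} the approximation, by post-composition with a spray of automorphisms, not inside the Lie-algebraic approximation.

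Your approach tries to enforce the jet condition inside the vector-field approximation itself, and this is where it breaks down. Two concrete problems. First, the ``piece $V_{0}$ realizing the prescribed jet'' is constructed only locally near $f(\Op L)$; to integrate it into a fiber-preserving automorphism of $\pi^{-1}(\Op L)$ alongside the Lie combinations of $\C$-complete fields, you would need $V_{0}$ itself to be (approximable by) a Lie combination of $\C$-complete fiber-tangent fields \emph{matching the jet exactly}, which is precisely the statement you are trying to prove and is not supplied by the fibered density property alone. Second, and more seriously, matching $(k+1)$-jets of vector fields along the fixed submanifold $f(\Op L)$ does not yield matching $k$-jets of their time-one flows at points of $f(\Op L)$ unless the fields vanish there; here they do not, since $\varphi_{t}$ moves $f(b)$. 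The jet of the flow at $f(b)$ depends on the field along the whole trajectory of $f(b)$, and the vector fields generating the subdivided isotopy are based near $\varphi_{t_{j-1}}(K)$ with the relevant jets living along the moving manifold $\varphi_{t_{j-1}}(f(\Op L))$, not along $f(\Op L)$. On top of this, the Andersen--Lempert approximation realizes Lie brackets by commutators of flows $\psi_{Y}^{-\sqrt t}\circ\psi_{X}^{-\sqrt t}\circ\psi_{Y}^{\sqrt t}\circ\psi_{X}^{\sqrt t}$, which introduce further jet errors that the Leibniz-rule bookkeeping for vanishing orders does not control. The spray-based correction in \cite{Ramos-Peon2019} is designed exactly to sidestep all of this, and I would recommend using it.
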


\begin{proof}
By the argument in the proof of \cite[Lemma 7]{Ramos-Peon2019}, there exist $\C$-complete holomorphic vector fields $V_{j}$ $(j=1,\ldots,N)$ on $Y$ tangent to the fibers of $\pi$ such that for the global $\pi$-spray
\begin{align*}
s:\pi^{-1}(\Op L)\times\C^{N}\to\pi^{-1}(\Op L),\quad s(y,w_{1},\ldots,w_{N})=\psi_{1}^{w_{1}}\circ\cdots\circ\psi_{N}^{w_{N}}(y)
\end{align*}
defined by the flows $\psi_{j}^{w}$ $(w\in\C)$ of $V_{j}$, the map
\begin{align*}
\C^{N}\to\J^{k}_{f(b),*}(\pi^{-1}(b)),\quad w\mapsto\j^{k}_{f(b)}(s(\cdot,w)|_{\pi^{-1}(b)})
\end{align*}
to the space of nondegenerate $k$-jets at $f(b)$ is a submersion at $0$ for each $b\in\Op L$ (cf. \cite[\S 2]{Ramos-Peon2019}).
By Theorem \ref{theorem:Andersen-Lempert}, there exists a fiber-preserving homotopy $\Phi_{t}:Y\to Y$ $(t\in[0,1])$ of holomorphic automorphisms such that for each $t\in[0,1]$,
\begin{itemize}
\item $\Phi_{0}=\id_{Y}$,
\item $\Phi_{t}$ approximates $\varphi_{t}$ uniformly on $K$, and
\item $\Phi_{t}^{-1}\circ\varphi_{t}$ is sufficiently close to the identity map on $\Op f(L)$.
\end{itemize}
Since $L$ is a Stein compact, there exists a homotopy of holomorphic maps $g_{t}:\Op L\to\C^{N}$ $(t\in[0,1])$ close to $0$ such that $g_{0}\equiv 0$ and
\begin{align*}
\j^{k}_{f(b)}(s(\cdot,g_{t}(b))|_{\pi^{-1}(b)})=\j^{k}_{f(b)}(\Phi_{t}^{-1}\circ\varphi_{t}|_{\pi^{-1}(b)})
\end{align*}
for all $b\in\Op L$ and $t\in[0,1]$ (cf. \cite[Lemma 2.5]{Kusakabe2021a}).
If we set
\begin{align*}
\widetilde\varphi_{t}=\Phi_{t}\circ s\circ(\id_{\pi^{-1}(\Op L)},g_{t}\circ\pi):\pi^{-1}(\Op L)\to\pi^{-1}(\Op L)\quad(t\in[0,1]),
\end{align*}
it has the desired properties.
\end{proof}

%
%

\section{Proof of Theorem \ref{theorem:main}}
\label{section:proof}

The goal of this section is to prove the relative version (Theorem \ref{theorem:main_relative}) of Theorem \ref{theorem:main}.
To state this theorem, we introduce the following notion.
Holomorphic convexity of a closed set is defined in the same way as polynomial convexity (see Theorem \ref{theorem:tame}).

\begin{definition}
Let $\pi:Y\to B$ be a holomorphic submersion.
A subset $S\subset Y$ is called a \emph{family of compact holomorphically convex sets} if the restriction $\pi|_{S}:S\to B$ is proper and each point of $B$ admits an open neighborhood $U\subset B$ such that $S\cap\pi^{-1}(U)\subset\pi^{-1}(U)$ is $\cO(\pi^{-1}(U))$-convex.
\end{definition}

The following is the relative version of Theorem \ref{theorem:main}.

\begin{theorem}
\label{theorem:main_relative}
Let $\pi:Y\to B$ be a holomorphic submersion between complex spaces and $S\subset Y$ be a family of compact holomorphically convex sets.
Assume that each point of $B$ admits an open neighborhood $U\subset B$ such that $\pi^{-1}(U)$ is Stein and the restriction $\pi^{-1}(U)\to U$ enjoys the density property.
Then the restriction $\pi|_{Y\setminus S}:Y\setminus S\to B$ enjoys the Oka property.
\end{theorem}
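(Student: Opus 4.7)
By Theorem \ref{theorem:elliptic_characterization} it suffices to verify convex ellipticity of $\pi|_{Y\setminus S}$, so I fix a compact convex $K\subset\C^{n}$ and a holomorphic $f:\Op K\to Y\setminus S$ with $f(K)\subset\pi^{-1}(U)\setminus S$ for an open $U\subset B$ as in the hypothesis (so $\pi^{-1}(U)$ is Stein, $\pi^{-1}(U)\to U$ has the density property, and $S\cap\pi^{-1}(U)$ is $\cO(\pi^{-1}(U))$-convex), and I must produce a dominating global $\pi$-spray $\Op K\times\C^{N}\to Y\setminus S$ over $f$. The first reduction is to a purely Stein setup. Setting $\bar f=\pi\circ f$, form the pullback submersion $p:Z:=\bar f^{*}\pi^{-1}(U)\to\Op K$; this is Stein (as a closed subvariety of the Stein space $\Op K\times\pi^{-1}(U)$) and enjoys the density property by Lemma \ref{lemma:pullback}. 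The closed set $\tilde S:=(\pi^{*}\bar f)^{-1}(S)\subset Z$ is a family of compact $\cO(Z)$-convex sets over $\Op K$ (since $\pi|_{S}$ is proper and, locally over $\Op K$, $\tilde S$ is the preimage under a holomorphic map between Stein spaces of an $\cO$-convex compact set), and $f$ lifts to a holomorphic section $\tilde f(z)=(z,f(z))$ of $p$ with image in $Z\setminus\tilde S$. Composition with the canonical projection $\pi^{*}\bar f:Z\to\pi^{-1}(U)\subset Y$ turns a dominating $p$-spray $\tilde s:\Op K\times\C^{N}\to Z\setminus\tilde S$ over $\tilde f$ into the required $\pi$-spray, so it suffices to construct $\tilde s$.

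Next, by Lemma \ref{lemma:tubular_neighborhood} I obtain a biholomorphic local $p$-spray $\iota_{0}:\Op L\times\B^{N}\to W_{0}\subset Z$ over $\tilde f|_{\Op L}$, where $L\subset\Op K$ is a Stein-compact neighbourhood of $K$; after shrinking the ball radius I may assume $\overline{W_{0}}\cap\tilde S=\emptyset$. The heart of the proof is to inflate $\iota_{0}$ to a global $p$-spray with image in $Z\setminus\tilde S$. I would carry this out by an iterative Anders\'{e}n--Lempert construction: inductively build biholomorphic local $p$-sprays $\iota_{k}:\Op L\times\B^{N}_{R_{k}}\to W_{k}\subset Z\setminus\tilde S$ with $R_{k}\to\infty$ and $\iota_{k}\approx\iota_{k-1}$ on $\Op L\times\B^{N}_{R_{k-1}}$, so that the sequence converges uniformly on compact subsets of $\Op L\times\C^{N}$ to the desired dominating global spray. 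To pass from $\iota_{k-1}$ to $\iota_{k}$, apply Corollary \ref{corollary:Andersen-Lempert_jet} with jet order $k=1$ along $\tilde f(L)$ to the fiber-preserving radial-contraction isotopy $\varphi_{t}(\iota_{k-1}(z,w))=\iota_{k-1}(z,tw)$ on $W_{k-1}$, extended by the identity on a small open neighbourhood of $\tilde S\cap p^{-1}(L)$; this yields a fiber-preserving homotopy of automorphisms $\widetilde\varphi_{t}$ of $p^{-1}(\Op L)$ that approximate $\varphi_{t}$ on a chosen compact and fix $\tilde f$ to first order. The doubled-radius spray is then $\iota_{k}(z,w):=\widetilde\varphi_{1/2}^{-1}(\iota_{k-1}(z,w/2))$ on $\Op L\times\B^{N}_{2R_{k-1}}$, and a short jet computation using the $1$-jet interpolation shows that $\partial_{w}\iota_{k}|_{w=0}=\partial_{w}\iota_{k-1}|_{w=0}$, so the dominating submersion property is preserved throughout the iteration.

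The main obstacle is verifying the $\cO(Z)$-convexity hypothesis of Corollary \ref{corollary:Andersen-Lempert_jet} at every step. Concretely, one must choose the compact set $K_{0}$ on which the isotopy is approximated to be a disjoint union of a closed subtube of $W_{k-1}$ with the compact $\cO(Z)$-convex set $\tilde S\cap p^{-1}(L)$, and show that $\varphi_{t}(K_{0})$ is $\cO(Z)$-convex in $Z$ for every $t\in[0,1]$. Granted this, $\widetilde\varphi_{1/2}$ will approximately fix $\tilde S\cap p^{-1}(L)$, so $\widetilde\varphi_{1/2}^{-1}(W_{k-1})$ remains disjoint from $\tilde S$ and $\iota_{k}(\Op L\times\B^{N}_{R_{k}})\subset Z\setminus\tilde S$, which is what keeps the inflated spray from crashing into $\tilde S$. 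This is precisely where the hypothesis that $\tilde S$ is a family of $\cO$-convex compact sets (rather than merely a closed set) is used in an essential way: it both supplies the $\cO(Z)$-convex compact piece of $\tilde S$ to be included in $K_{0}$ and, combined with the biholomorphic-tube structure of $W_{k-1}$, ensures that $\cO(Z)$-convexity of the disjoint union can be propagated through the radial contraction---the delicate bookkeeping on which the convergence of the iteration inside $Z\setminus\tilde S$ ultimately rests.
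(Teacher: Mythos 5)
Your reductions to the Stein, density-property setting (via Theorem \ref{theorem:elliptic_characterization}, Lemma \ref{lemma:pullback}, and Lemma \ref{lemma:tubular_neighborhood}) match the paper exactly, and the intent of what follows --- inflate the local tube into a global dominating spray while dodging $\tilde S$ --- is also right. But the iterative step as you have set it up has a genuine gap, and you flag it yourself without resolving it: at stage $k$ you apply Corollary \ref{corollary:Andersen-Lempert_jet} to the radial contraction of the \emph{deformed} tube $W_{k-1}=\iota_{k-1}(\Op L\times\B^{N}_{R_{k-1}})$, and you would need $\varphi_{t}(K_{0})$ to be $\cO(Z)$-convex for every $t$, where $K_{0}$ is a closed subtube of $W_{k-1}$ union $\tilde S\cap p^{-1}(L)$. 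Unwinding the induction, $W_{k-1}=\Phi_{k-1}^{-1}(W_{0})$ with $\Phi_{k-1}=\widetilde\varphi_{1/2}^{(1)}\circ\cdots\circ\widetilde\varphi_{1/2}^{(k-1)}$, so the compact you would feed to Andersén--Lempert at step $k$ is of the form $\Phi_{k-1}^{-1}(\text{original tube compact})\cup(\tilde S\cap p^{-1}(L))$. The $\Phi_{k-1}$ are automorphisms only of $p^{-1}(\Op L)$, not of the ambient Stein manifold $Z$, so they do not preserve $\cO(Z)$-convexity; you cannot pull the required convexity forward from step $1$. Nor does it help that each $\widetilde\varphi^{(j)}$ approximately fixes $\tilde S$: approximate fixing controls displacement but not the hull, and this is precisely the point your final paragraph leaves hanging.

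The paper avoids this entirely by applying Corollary \ref{corollary:Andersen-Lempert_jet} only to the \emph{fixed original} configuration. After rescaling the initial tube so that $\iota(\widetilde K\times r\overline{\B^{N}})\cup L$ is $\cO(\widetilde Y)$-convex for all $r\in[0,1]$ (a one-time check), it extracts a whole sequence $\{\widetilde\varphi_{j}\}$ of automorphisms of $\widetilde\pi^{-1}(\Op K)$, each approximating the same half-contraction $\mu_{1}$ on the same tube and approximating the identity on $L$ with geometrically decaying error $\varepsilon/2^{j}$. These fixed automorphisms define a non-autonomous basin $\Omega$; the geometric decay forces the forward orbit of any point of $\widetilde S|_{\widetilde K}$ to stay inside $L$, so $\Omega$ misses $\widetilde S$, and the relative Forn\ae ss--Wold argument (Lemma \ref{lemma:elliptic_neighborhood}) does the iterative spray construction inside $\Omega$ using only the given $\widetilde\varphi_{j}$ --- no further holomorphic-convexity hypotheses are needed at the inductive stages. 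In short: fix the sequence of automorphisms first, then iterate sprays, rather than alternating between spray updates and fresh Andersén--Lempert approximations on moving tubes. Your proposal would become correct if you made exactly this reorganization, but as written the convexity bookkeeping you defer is not a technicality --- it is the step the basin formulation is designed to eliminate.
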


In the proof of Theorem \ref{theorem:main_relative}, we use the notion of \emph{non-autonomous basins with uniform bounds} (cf. \cite{Abbondandolo,Fornaess2016}).
Let $Y$ be a complex manifold, $\iota:\B^{n}\to\iota(\B^{n})\subset Y$ be a biholomorphic map and $\{\varphi_{j}\}_{j\in\N}$ be a sequence of holomorphic automorphisms of $Y$.
Assume that $\varphi_{j}\circ\iota(\B^{n})\subset\iota(\B^{n})$ for all $j\in\N$ and there exist $0<C<D<1$ such that
\begin{align*}
\varphi_{j}\circ\iota(0)=\iota(0),\quad C\|w\|\leq\|\iota^{-1}\circ\varphi_{j}\circ\iota (w)\|\leq D\|w\|
\end{align*}
for all $j\in\N$ and $w\in\B^{n}$.
Then the set
\begin{align*}
\left\{y\in Y:\lim_{j\to\infty}\varphi_{j}\circ\cdots\cdot\varphi_{1}(y)=\iota(0)\right\}
\end{align*}
is called a \emph{non-autonomous basin with uniform bounds}.
The long-standing Bedford conjecture states that such a basin is biholomorphic to $\C^{n}$ (cf. \cite{Abbondandolo}).
Forn\ae ss and Wold proved that such a basin is elliptic in the case of $Y=\C^{n}$ \cite[Theorem 1.1]{Fornaess2016}.
In fact, their proof works for any complex manifold $Y$ (see the proof of Lemma \ref{lemma:elliptic_neighborhood}).

\begin{theorem}[{cf. Forn\ae ss--Wold \cite[Theorem 1.1]{Fornaess2016}}]
\label{theorem:non-autonomous}
Every non-autonomous basin with uniform bounds is elliptic.
\end{theorem}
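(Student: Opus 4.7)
I would follow the strategy of Forn{\ae}ss and Wold in \cite[Theorem~1.1]{Fornaess2016}, observing that their argument uses only the dynamical data---namely, that each $\Phi_k := \varphi_k \circ \cdots \circ \varphi_1$ is a global holomorphic automorphism of $Y$ fixing $\iota(0)$, together with the uniform contraction bounds---and not the specific structure of the ambient $\C^n$. Writing $u := \iota^{-1}$ and $\Omega_k := \Phi_k^{-1}(\iota(\B^n))$, the inclusion $\varphi_{k+1}(\iota(\B^n)) \subset \iota(\B^n)$ yields $\iota(\B^n) \subset \Omega_k \subset \Omega_{k+1}$, while the upper bound $\|u \circ \Phi_k \circ \iota(w)\| \le D^k \|w\|$ ensures $\Omega = \bigcup_k \Omega_k$. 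Each $\Omega_k$ is biholomorphic to $\B^n$ via $u \circ \Phi_k$, so $\Omega$ is Stein.

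Setting $A^{(k)} := d(u \circ \Phi_k \circ \iota)|_0$---an invertible linear map whose singular values lie in $[C^k, D^k]$---the natural candidates for a dominating spray $s \colon \Omega \times \C^n \to \Omega$ are
\begin{equation*}
s_k(y, w) := \Phi_k^{-1}\!\left(\iota\bigl(u(\Phi_k(y)) + A^{(k)} w\bigr)\right),
\end{equation*}
defined on the open subset of $\Omega_k \times \C^n$ where the argument of $\iota$ lies in $\B^n$. By direct computation $s_k(y, 0) = y$, and the linear renormalization by $A^{(k)}$ cancels the expansion $(A^{(k)})^{-1}$ of $d\Phi_k^{-1}$ at $\iota(0)$, yielding $\partial_w s_k(\iota(0), 0) = \id$; moreover, since $u(\Phi_k(y)) \to 0$ uniformly on compacta of $\Omega$, the derivative $\partial_w s_k(y, 0)$ converges to $\id$ uniformly as well. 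A Montel argument on the Stein exhaustion $\{\Omega_k\}$ then extracts a subsequential limit $s = \lim_{j \to \infty} s_{k_j}$ giving the desired global dominating spray.

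The main technical hurdle is this limiting step. The contraction is anisotropic, so the singular values of $A^{(k)}$ spread across $[C^k, D^k]$, and the higher-order Taylor remainders of $u \circ \Phi_k^{-1} \circ \iota$ on the shrinking balls provided by the quantitative inverse function theorem require delicate control. Forn{\ae}ss--Wold handle this using only the uniform constants $C, D$ and the operator norms of the iterated compositions, never invoking any global feature of $\C^n$; transporting their estimates via $\iota$ to the local chart on $Y$, the same argument produces a global holomorphic $s \colon \Omega \times \C^n \to \Omega$ with $s(y, 0) = y$ and $\partial_w s(y, 0)$ invertible at every $y \in \Omega$, certifying ellipticity of $\Omega$.
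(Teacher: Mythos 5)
Your first paragraph is correct, but the limiting step is a genuine gap, not a technical hurdle you can delegate to Forn\ae ss--Wold: the renormalized sprays $s_k$ you define do not converge in general, not even along a subsequence, and $\{s_k\}$ need not be a normal family. Unwinding the chain rule gives $\partial_w s_k(y,0)=(d\Phi_k|_y)^{-1}\circ d\iota|_{u(\Phi_k(y))}\circ A^{(k)}$, and the operator $(d\Phi_k|_y)^{-1}\circ d\Phi_k|_{\iota(0)}$ is built from the near-identity factors $d\varphi_j|_{\Phi_{j-1}(y)}^{-1}\circ d\varphi_j|_{\iota(0)}=\id+E_j$, $\|E_j\|\lesssim D^{j-1}$, conjugated by intermediate differentials whose condition number can reach $(D/C)^{j-1}$; the increments are then of size $(D^2/C)^{j}$, and $\|\partial_w s_k(y,0)\|$ itself can grow like $(D/C)^k$. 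Both blow up unless $D^2<C$, a non-resonance condition absent from the hypothesis. Concretely, taking $Y=\C^2$, $\iota=\id$, and the autonomous sequence $\varphi_j(z_1,z_2)=(cz_1,\,dz_2+\varepsilon z_1^2)$ with $c^2\geq d$, a direct computation yields $s_k(y,w)=\bigl(y_1+w_1,\;y_2+w_2-d^{-k}a_k(2y_1w_1+w_1^2)\bigr)$ with $d^{-k}a_k\to\infty$. This is precisely why the Bedford conjecture is open: if your renormalization scheme converged, $\lim_k(A^{(k)})^{-1}\circ u\circ\Phi_k$ would produce a biholomorphism of the basin onto $\C^n$.

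Forn\ae ss--Wold, and the paper's relative version in the proof of Lemma~\ref{lemma:elliptic_neighborhood}, therefore proceed by an \emph{inductive approximation scheme} rather than a closed-formula limit. One constructs dominating local sprays $s_j:\Op K_j\times\Op(j\overline{\B^n})\to\Omega$ over the inclusion, with the $w$-domain growing without bound: $s_j$ is obtained from $s_{j-1}$ by pushing forward by a large iterate $\Phi_N$, so that the image is compressed into $\iota(\B^n)$ where the quantitative inverse function theorem of \cite[Lemma~2.2]{Fornaess2016} applies to enlarge the $w$-domain, and then pulling back; crucially one \emph{arranges} that $s_j$ approximates $s_{j-1}$ on the smaller set $\Op K_{j-1}\times\Op(r(j-1)\overline{\B^n})$. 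The limit $s=\lim_j s_j$ then exists by a telescoping Cauchy estimate, not by normality, and the freedom to modify each $s_j$ away from $w=0$ at every stage is exactly what absorbs the anisotropy that destroys your fixed renormalized sequence.
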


We need the relative version of this theorem.
Let $\pi:Y\to B$ be a holomorphic submersion between complex spaces, $f:B\to Y$ be a holomorphic section of $\pi$ and $\iota:B\times\B^{n}\to\iota(B\times\B^{n})\subset Y$ be a biholomorphic local $\pi$-spray over $f$.
Assume that $\{\varphi_{j}\}_{j\in\N}$ is a sequence of fiber-preserving holomorphic automorphisms of $Y$ such that $\varphi_{j}\circ\iota(B\times\B^{n})\subset\iota(B\times\B^{n})$ for all $j\in\N$ and there exist $0<C<D<1$ such that
\begin{align*}
\varphi_{j}\circ f(b)=f(b),\quad C\|w\|\leq\|\pr_{\B^{n}}\circ\iota^{-1}\circ\varphi_{j}\circ\iota(b,w)\|\leq D\|w\|
\end{align*}
for all $j\in\N$ and $(b,w)\in B\times\B^{n}$ where $\pr_{\B^{n}}:B\times\B^{n}\to\B^{n}$ is the projection to $\B^{n}$.
Let us consider the set
\begin{align*}
\Omega=\left\{y\in Y:\lim_{j\to\infty}\varphi_{j}\circ\cdots\cdot\varphi_{1}(y)\in f(B)\right\}.
\end{align*}

\begin{lemma}
\label{lemma:elliptic_neighborhood}
Under the above assumptions, the restriction $\pi|_{\Omega}:\Omega\to B$ is elliptic.
\end{lemma}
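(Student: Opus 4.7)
The plan is to imitate the proof of Forn\ae ss and Wold \cite[Theorem 1.1]{Fornaess2016} (our Theorem \ref{theorem:non-autonomous}) in the present fibered setting, using the local $\pi$-spray $\iota$ as the seed. Write $\Phi_{j}:=\varphi_{j}\circ\cdots\circ\varphi_{1}$, a fiber-preserving holomorphic automorphism of $Y$, and set $\Omega_{j}:=\Phi_{j}^{-1}(\iota(B\times\B^{n}))$. First I would establish three preliminary facts: (a) $\Omega$ is open in $Y$; (b) $\Omega_{1}\subset\Omega_{2}\subset\cdots$ and $\Omega=\bigcup_{j}\Omega_{j}$; and (c) $\iota^{-1}\circ\Phi_{j}\colon\Omega_{j}\xrightarrow{\sim}B\times\B^{n}$ is a fiber-preserving biholomorphism. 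Openness follows because for any $y_{0}\in\Omega$ some $\Phi_{j_{0}}(y_{0})$ lies in $\iota(B\times r\B^{n})$ with $r<1$, so continuity of $\Phi_{j_{0}}$ traps an open neighborhood of $y_{0}$ inside $\iota(B\times\B^{n})$, and subsequent $\varphi_{j}$ keep it there by the invariance $\varphi_{j}(\iota(B\times\B^{n}))\subset\iota(B\times\B^{n})$ while contracting its $\iota$-norm to $0$. The inclusions in (b) come from the same invariance, the equality uses the upper contraction bound $D<1$, and (c) is immediate since $\Phi_{j}$ is an automorphism of $Y$ and $\iota$ is a biholomorphism onto its image. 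In particular $\pi|_{\Omega}\colon\Omega\to B$ is a holomorphic submersion through which the section $f$ factors.

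To prove ellipticity it suffices to produce a holomorphic vector bundle $E\to\Omega$ and a dominating global $\pi$-spray $s\colon E\to\Omega$. On each $\Omega_{j}$ the trivialization in (c) pulls back the trivial translation spray on $B\times\B^{n}$ to a dominating local $\pi$-spray $\sigma_{j}$ on $\Omega_{j}$. Following the Forn\ae ss--Wold construction \cite{Fornaess2016}, one rescales each $\sigma_{j}$ by a factor matched to the non-autonomous contraction rate of $\{\varphi_{j}\}$ and takes a limit as $j\to\infty$ to glue the $\sigma_{j}$ into a single global dominating $\pi$-spray $s$ on $\Omega$. The hypotheses $C\|w\|\le\|\pr_{\B^{n}}\circ\iota^{-1}\circ\varphi_{j}\circ\iota(b,w)\|\le D\|w\|$ give two-sided bounds on the fiberwise singular values of $\iota^{-1}\circ\varphi_{j}\circ\iota$ at the zero section, which is exactly what the Schwarz-type estimates in \cite{Fornaess2016} need in order to guarantee both uniform convergence on compacts and survival of the dominating property in the limit.

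The main obstacle is precisely this convergence-and-domination step, which is the technical heart of \cite{Fornaess2016}; its adaptation to the relative setting is routine because (i) all Forn\ae ss--Wold estimates are uniform in the base parameter $b\in B$ thanks to the uniformity of the hypotheses on $\varphi_{j}$, (ii) each $\sigma_{j}$ is $\pi$-fiber preserving since $\iota$ is a $\pi$-spray and each $\varphi_{j}$ preserves $\pi$-fibers, and (iii) the image of $\sigma_{j}$ lies in $\Omega_{j}\subset\Omega$, so the limit spray $s$ maps $E$ into $\Omega$ rather than merely into $Y$. With this dominating $\pi$-spray in hand, $\pi|_{\Omega}$ is elliptic.
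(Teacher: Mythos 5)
Your overall plan is the same as the paper's: realize $\Omega$ as a non-autonomous basin relative to the section $f$, trivialize along the orbit via $\iota^{-1}\circ\Phi_j$, and adapt the Forn\ae ss--Wold argument to the fibered setting. The preliminary observations (a)--(c) are correct and match the setup the paper uses (the paper also first localizes to a compact neighborhood $K\subset B$ of a point, since ellipticity is local; you should do the same, because the Forn\ae ss--Wold estimates are only uniform on compacts).

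However, your description of the key convergence step mischaracterizes the Forn\ae ss--Wold construction in a way that, read literally, does not work. You define $\sigma_j$ on $\Omega_j$ by pulling back the standard translation spray on $B\times\B^n$ through $\iota^{-1}\circ\Phi_j$, and you then say ``one rescales each $\sigma_j$ by a factor matched to the non-autonomous contraction rate and takes a limit as $j\to\infty$.'' But the naively constructed $\sigma_j$'s do not converge, even after rescaling: $\sigma_j$ and $\sigma_{j+1}$ are built from the unrelated maps $\Phi_j^{-1}$ and $\Phi_{j+1}^{-1}$, which agree only on the fixed locus $f(B)$ and differ substantially elsewhere, so there is no Cauchy behaviour to exploit. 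What Forn\ae ss--Wold (and the paper's proof) actually do is construct a \emph{new} sequence $s_j$ by induction: starting from the germ $s_0(\iota(b,w),w')=\iota(b,w+w')$ on $\Op K_0\times\Op\{0\}^n$, one pushes $s_{j-1}$ forward by a sufficiently late $\Phi_N$, extends and approximates it inside $\iota(\Op K\times\B^n)$ using a tubular-neighborhood/extension argument (this is \cite[Lemma 2.2]{Fornaess2016}, the technical heart), and pulls back by $\Phi_N^{-1}$. The crucial point is that $s_j$ is \emph{constructed to approximate} $s_{j-1}$ on $\Op K_{j-1}\times\Op(r(j-1)\overline{\B^n})$ with prescribed accuracy, which is what makes the limit $s=\lim_j s_j$ exist and remain dominating. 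Your items (i)--(iii) about uniformity, fiber-preservation and the image landing in $\Omega$ are all correct and useful, but they support the \emph{inductive approximation} scheme, not the ``rescale-and-limit'' scheme you wrote down. So the plan is right, but the one sentence describing how the sprays are glued is wrong and needs to be replaced by the genuine inductive approximation argument.
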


Before proving the above lemma, let us recall the definition of ellipticity for holomorphic submersions.

\begin{definition}
A holomorphic submersion $\pi:Y\to B$ is \emph{elliptic} if there exists an open cover $\{U_{\alpha}\}_{\alpha}$ of $B$ such that for each $\alpha$ there exists a holomorphic vector bundle $p:E\to\pi^{-1}(U_{\alpha})$ and a holomorphic map $s:E\to\pi^{-1}(U_{\alpha})$ such that $\pi\circ s=\pi\circ p$, $s(0_{y})=y$ and $s|_{E_{y}}:E_{y}\to\pi^{-1}(\pi(y))$ is a submersion at $0_{y}$ for each $y\in\pi^{-1}(U_{\alpha})$. 
\end{definition}

It can be easily seen that ellipticity implies convex ellipticity (cf. \cite[Proof of Proposition 8.8.11\,(b)]{Forstneric2017}).

\begin{proof}[Proof of Lemma \ref{lemma:elliptic_neighborhood}]
Since the definition of ellipticity is local, we may assume that $B$ is a locally closed complex subvariety of an affine space.
Take a point $b\in B$ and a compact neighborhood $K\subset B$ of $b$.
It suffices to construct a dominating global $\pi$-spray $s:(\Omega\cap\pi^{-1}(K^{\circ}))\times\C^{n}\to\Omega$ over the inclusion $\Omega\cap\pi^{-1}(K^{\circ})\hookrightarrow\Omega$.
Set $K_{j}=(\varphi_{j}\circ\cdots\circ\varphi_{1})^{-1}(\iota(K\times(1/2)\overline{\B^{n}}))$ $(j\in\N)$, $K_{0}=\iota(K\times(1/2)\overline{\B^{n}})$ and
\begin{align*}
s_{0}:\Op K_{0}\times\Op\,\{0\}^{n}\to\Omega,\quad s_{0}(\iota(b,w),w')=\iota(b,w+w').
\end{align*}
Take $0<r<1$.
For each $j\in\N$, we shall inductively construct a dominating local $\pi$-spray $s_{j}:\Op K_{j}\times\Op(j\overline{\B^{n}})\to\Omega$ over the inclusion $\Op K_{j}\hookrightarrow\Omega$ which approximates $s_{j-1}$ on $\Op K_{j-1}\times\Op(r(j-1)\overline{\B^{n}})$.
Then $s=\lim_{j\to\infty}s_{j}:(\Omega\cap\pi^{-1}(K^{\circ}))\times\C^{n}\to\Omega$ exists and has the desired property.

Assume that we already have $s_{j-1}:\Op K_{j-1}\times\Op((j-1)\overline{\B^{n}})\to\Omega$ for some $j\in\N$.
By definition, there exists $N\in\N$ such that
\begin{align*}
(\varphi_{N}\circ\cdots\circ\varphi_{1})(K_{j}\cup s_{j-1}(K_{j-1}\times(j-1)\overline{\B^{n}}))\subset\iota(K\times\B^{n}).
\end{align*}
Then by the argument in \cite[p.\,4]{Fornaess2016}, there exist a positive continuous function $t:\Op K_{j}\to\R$ and a holomorphic map $\tilde s_{j-1}:\{(y,w)\in\Op K_{j}\times\C^{n}:\|w\|<t(y)\}\to\iota(\Op K\times\B^{n})$ such that
\begin{itemize}
\item $\tilde s_{j-1}|_{\Op K_{j}\times\Op\,\{0\}^{n}}$ is a dominating local $\pi$-spray over $\varphi_{N}\circ\cdots\circ\varphi_{1}|_{\Op K_{j}}$,
\item $t|_{K_{j-1}}>j-1$, and
\item $(\varphi_{N}\circ\cdots\circ\varphi_{1})^{-1}\circ\tilde s_{j-1}$ approximates $s_{j-1}$ on $\Op K_{j-1}\times\Op((j-1)\overline{\B^{n}})$.
\end{itemize}
By applying the argument in the proof of \cite[Lemma 2.2]{Fornaess2016} to
\begin{align*}
(\varphi_{N}\circ\cdots\circ\varphi_{1})^{-1}\circ\tilde s_{j-1}:\{(y,w)\in\Op K_{j}\times\C^{n}:\|w\|<t(y)\}\to\Omega
\end{align*}
as in \cite[p.\,4]{Fornaess2016}, we can obtain a dominating local $\pi$-spray $s_{j}:\Op K_{j}\times\Op(j\overline{\B^{n}})\to\Omega$ over the inclusion which approximates $s_{j-1}$ on $\Op K_{j-1}\times\Op(r(j-1)\overline{\B^{n}})$.
\end{proof}

\begin{proof}[Proof of Theorem \ref{theorem:main_relative}]
By Theorem \ref{theorem:elliptic_characterization}, it suffices to prove that the restriction $\pi|_{Y\setminus S}:Y\setminus S\to B$ is convexly elliptic.
Since the definition of convex ellipticity is local, we may assume that
\begin{itemize}
\item $Y$ is Stein,
\item $\pi:Y\to B$ enjoys the density property, and
\item $S\subset Y$ is $\cO(Y)$-convex
\end{itemize}
by the assumption in Theorem \ref{theorem:main_relative} and the definition of a family of compact holomorphically convex sets.
Let $K\subset\C^{n}$ $(n\in\N)$ be a compact convex set and $f:\Op K\to Y\setminus S$ be a holomorphic map.
Take a small compact convex neighborhood $\widetilde K\subset\C^{n}$ of $K$ such that $f:\Op\widetilde K\to Y\setminus S$ is defined.
If we set
\begin{align*}
\widetilde Y=(\pi\circ f)^{*}Y,\quad \widetilde\pi=(\pi\circ f)^{*}\pi,\quad g=\pi^{*}(\pi\circ f),\quad \widetilde S=g^{-1}(S),
\end{align*}
then the following is a pullback diagram:
\begin{center}
\begin{tikzcd}
\widetilde Y\setminus\widetilde S \arrow[d, "\widetilde\pi|_{Y\setminus\widetilde S}"] \arrow[r, "g|_{Y\setminus\widetilde S}"] & Y\setminus S \arrow[d, "\pi"] \\
\Op\widetilde K \arrow[r, "\pi\circ f"] & B
\end{tikzcd}
\end{center}
Since we may assume that $\Op\widetilde K$ is Stein, it follows from Lemma \ref{lemma:pullback} that $\widetilde Y$ is a Stein manifold and $\widetilde\pi:\widetilde Y\to\Op\widetilde K$ enjoys the density property.
Note also that $\widetilde S\subset\widetilde Y$ is $\cO(\widetilde Y)$-convex and the restriction $\widetilde\pi|_{\widetilde S}:\widetilde S\to\Op\widetilde K$ is proper.

By the universality of the pullback, there exists a holomorphic section $\tilde f:\Op\widetilde K\to\widetilde Y\setminus\widetilde S$ of $\widetilde\pi$ such that $g\circ\tilde f=f$:
\begin{center}
\begin{tikzcd}
\Op\widetilde K \arrow[drr, bend left=15, "f"] \arrow[ddr, bend right=20, equal] \arrow[dr, dashed, "\tilde f"] & & \\
& \widetilde Y\setminus\widetilde S \arrow[d, "\widetilde\pi|_{Y\setminus\widetilde S}"] \arrow[r, "g|_{Y\setminus\widetilde S}"] & Y\setminus S \arrow[d, "\pi"] \\
& \Op\widetilde K \arrow[r, "\pi\circ f"] & B
\end{tikzcd}
\end{center}
Then there exists a biholomorphic local $\pi$-spray $\iota:\Op\widetilde K\times2\B^{N}\to\iota(\Op \widetilde K\times2\B^{N})\subset\widetilde Y\setminus\widetilde S$ over $\tilde f$ by Lemma \ref{lemma:tubular_neighborhood}.
Note that $\widetilde S|_{\widetilde K}=\widetilde S\cap\widetilde\pi^{-1}(\widetilde K)$ is a compact $\cO(\widetilde Y)$-convex set since $\widetilde S$ is $\cO(\widetilde Y)$-convex and $\widetilde K$ is convex.
Take a small compact $\cO(\widetilde Y)$-convex neighborhood $L\subset\widetilde Y\setminus \iota(\Op\widetilde K\times2\B^{N})$ of $\widetilde S|_{\widetilde K}$.
After rescaling $2\B^{N}$ if necessary, we may assume that $\iota(\widetilde K\times r\overline{\B^{N}})\cup L\subset\widetilde Y$ is $\cO(\widetilde Y)$-convex for all $r\in[0,1]$.
Define a smooth isotopy $\mu_{t}:2\B^{N}\to2\B^{N}$ $(t\in[0,1])$ by $w\mapsto (1-t/2)w$, and consider the fiber-preserving smooth isotopy $\varphi_{t}:\iota(\Op\widetilde K\times2\B^{N})\cup\Op(L)\to\widetilde Y$ $(t\in[0,1])$ defined by
\begin{align*}
\varphi_{t}(y)=\begin{cases}
\iota\circ(\id_{\Op\widetilde K}\times\mu_{t})\circ\iota^{-1}(y)\quad\text{if $y\in\iota(\Op\widetilde K\times2\B^{N})$,} \\
y\quad\text{if $y\in\Op L$.}
\end{cases}
\end{align*}
Take a distance function $d$ on $\widetilde Y$ and set
\begin{align*}
\varepsilon=\inf_{y\in \widetilde S|_{\widetilde K},\ y'\in\widetilde Y\setminus L^{\circ}}d(y,y')>0.
\end{align*}
By Corollary \ref{corollary:Andersen-Lempert_jet}, there exist fiber-preserving holomorphic automorphisms $\widetilde\varphi_{j}:\widetilde\pi^{-1}(\Op K)\to\widetilde\pi^{-1}(\Op K)$ $(j\in\N)$ such that for each $j\in\N$,
$\widetilde\varphi_{j}\circ\iota(\Op K\times\B^{N})\subset\iota(\Op K\times\B^{N})$,
\begin{align*}
\widetilde\varphi_{j}\circ\tilde f(z)=\tilde f(z),\quad \frac{1}{4}\|w\|\leq\|\pr_{\B^{N}}\circ\iota^{-1}\circ\widetilde\varphi_{j}\circ\iota(z,w)\|\leq\frac{3}{4}\|w\|
\end{align*}
for all $(z,w)\in\Op K\times\B^{N}$, and 
\begin{align*}
\sup_{y\in L\cap\widetilde\pi^{-1}(\Op K)}d(\widetilde\varphi_{j}(y),y)<\varepsilon/2^{j}.
\end{align*}
Then by Lemma \ref{lemma:elliptic_neighborhood}, the restriction of $\widetilde\pi|_{\widetilde\pi^{-1}(\Op K)}:\widetilde\pi^{-1}(\Op K)\to\Op K$ to the set
\begin{align*}
\Omega=\left\{y\in\widetilde\pi^{-1}(\Op K):\lim_{j\to\infty}\varphi_{j}\circ\cdots\cdot\varphi_{1}(y)\in\tilde f(\Op K)\right\}
\end{align*}
is elliptic and hence convexly elliptic.
Note that $\tilde f(\Op K)\subset\Omega\subset\widetilde Y\setminus\widetilde S$ holds by construction.
Thus there exists a dominating global $\widetilde\pi$-spray $s:\Op K\times\C^{N}\to\Omega\subset\widetilde Y\setminus\widetilde S$ over $\tilde f|_{\Op K}$.
Then the composition $g\circ s:\Op K\times\C^{N}\to Y\setminus S$ is a dominating global $\pi$-spray over $f:\Op K\to Y\setminus S$.
\end{proof}

\begin{remark}
In fact, as pointed out by Forstneri\v{c} and Wold \cite{Forstneric2020a}, our proof of Theorem \ref{theorem:main_relative} can be modified to obtain a family of Fatou--Bieberbach domains as $\Omega$ (see the proofs of \cite[Lemma 3.3]{Forstneric2020a} and \cite[Theorem 4]{Wold2005}).
\end{remark}

%
%

\section{Applications}
\label{section:application}

In this section, we present applications of our results.
Let us first prove Theorem \ref{theorem:tame} in the introduction.
In the proof, we need the following lemma which is an immediate consequence of \cite[Corollary 4.1]{Kusakabe2021a}.

\begin{lemma}
\label{lemma:span}
Let $Y$ be a complex manifold.
Assume that for any $y\in Y$ there exist complex manifolds $B_{j}$ $(j=1,\ldots, k)$ and holomorphic submersions $\pi_{j}:Y\to B_{j}$ $(j=1,\ldots, k)$ with the Oka property such that $\T_{y}Y=\sum_{j=1}^{k}\T_{y}\pi_{j}^{-1}(\pi_{j}(y))$.
Then $Y$ is an Oka manifold.
\end{lemma}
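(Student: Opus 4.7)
\smallskip

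\noindent\textbf{Proof plan.}
By Theorem \ref{theorem:elliptic_characterization}, proving that $Y$ is Oka reduces to proving that the constant submersion $Y\to *$ is convexly elliptic. Thus, given a compact convex set $K\subset\C^n$ and a holomorphic map $f:\Op K\to Y$, the task is to construct a dominating global spray over $f$.

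The first step is a finiteness reduction via compactness. For each $y\in Y$, the hypothesis supplies holomorphic submersions $\pi_{1}^{y},\ldots,\pi_{k_{y}}^{y}\colon Y\to B_{j}^{y}$ with the Oka property whose vertical tangent spaces span $\T_{y}Y$. Both submersivity and the spanning relation $\T_{y}Y=\sum_{j}\T_{y}(\pi_{j}^{y})^{-1}(\pi_{j}^{y}(y))$ are open conditions on $y$, so each point admits an open neighborhood in $Y$ on which the same finite collection of submersions continues to span the tangent bundle. Since $f(K)\subset Y$ is compact, finitely many such neighborhoods cover $f(K)$; taking the union of their submersions yields a finite list $\pi_{1},\ldots,\pi_{N}\colon Y\to B_{i}$ of holomorphic submersions with the Oka property such that
\begin{align*}
\T_{y}Y=\sum_{i=1}^{N}\T_{y}\pi_{i}^{-1}(\pi_{i}(y))\qquad\text{for every } y\in f(K).
\end{align*}

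The second step is to produce, from each $\pi_i$, a partial dominating spray over $f$. Since $\pi_{i}$ enjoys the Oka property, Theorem \ref{theorem:elliptic_characterization} gives that $\pi_{i}$ is convexly elliptic, which applied to the holomorphic section $f$ of the pullback $(\pi_{i}\circ f)^{*}\pi_{i}$ yields a dominating global $\pi_{i}$-spray $s_{i}\colon\Op K\times\C^{N_{i}}\to Y$ over $f$: namely $s_{i}(\cdot,0)=f$, $\pi_{i}\circ s_{i}(x,w)=\pi_{i}(f(x))$, and the partial differential $\partial_{w}s_{i}(x,0)\colon\C^{N_{i}}\to\T_{f(x)}Y$ surjects onto $\T_{f(x)}\pi_{i}^{-1}(\pi_{i}(f(x)))$ for each $x\in\Op K$. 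By the spanning property of the first step, the joint image of $\partial_{w_{1}}s_{1}(x,0),\ldots,\partial_{w_{N}}s_{N}(x,0)$ is all of $\T_{f(x)}Y$ for every $x\in K$.

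The third and final step is to fuse the family $\{s_{i}\}_{i=1}^{N}$ of fiber-dominating sprays over $f$ whose partial differentials jointly span $\T_{f(x)}Y$ into a \emph{single} dominating global spray $s\colon\Op K\times\C^{N_{1}+\cdots+N_{N}}\to Y$ over $f$. This assembly is the main technical obstacle, but it is precisely the content of \cite[Corollary 4.1]{Kusakabe2020a} (a ``spanning-to-dominating'' principle for global sprays on a convex domain), from which the result follows at once. Having exhibited such an $s$, convex ellipticity of $Y\to *$ is established, and hence $Y$ is Oka.
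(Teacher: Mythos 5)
Your proposal is correct and follows essentially the same route as the paper: use the Oka property of the $\pi_j$ to manufacture dominating $\pi_j$-sprays over $f$, observe that the $w$-differentials of these sprays jointly span $\T_{f(x)}Y$, and invoke \cite[Corollary 4.1]{Kusakabe2020a} to conclude. Two small points where the paper's argument is lighter than yours. First, the paper works with an arbitrary Stein source $X$, a holomorphic map $f\colon X\to Y$, and a \emph{single} point $x_{0}\in X$, producing dominating $\pi_j$-sprays $s_j\colon X\times\C^{N_j}\to Y$ whose differentials span only at $x_0$; \cite[Corollary 4.1]{Kusakabe2020a} already accepts this pointwise spanning data and returns Okaness directly, so the compactness/finiteness reduction you insert to get spanning on all of $f(K)$ is unnecessary (though harmless). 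Second, the step where you obtain the $\pi_i$-sprays from convex ellipticity of $\pi_i$ is slightly delicate, since convex ellipticity comes with an open cover of the base and a priori only applies when $f(K)$ sits over a single member of that cover; the paper sidesteps this by appealing directly to the proof of \cite[Corollary 8.8.7]{Forstneric2017}, which produces a dominating global $\pi_j$-spray over any holomorphic map from a Stein manifold, with no such restriction. Your pullback remark gestures in the right direction but as written leaves this gap; citing the Forstneri\v{c} result closes it cleanly.
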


\begin{proof}
Let $X$ be a Stein manifold, $f:X\to Y$ be a holomorphic map and $x_{0}\in X$ be a point.
Take complex manifolds $B_{j}$ $(j=1,\ldots, k)$ and holomorphic submersions $\pi_{j}:Y\to B_{j}$ $(j=1,\ldots, k)$ with the Oka property such that $\T_{f(x_{0})}Y=\sum_{j=1}^{k}\T_{f(x_{0})}\pi_{j}^{-1}(\pi_{j}(f(x_{0})))$.
Then for each $j=1,\ldots, k$, the Oka property of $\pi_{j}$ implies that there exists a dominating global $\pi_{j}$-spray $s_{j}:X\times\C^{N_{j}}\to Y$ over $f$ (cf. the proof of \cite[Corollary 8.8.7]{Forstneric2017}).
Note that
\begin{align*}
\sum_{j=1}^{k}\partial_{w}|_{w=0}s_{j}(x_{0},w)(\T_{0}\C^{N_{j}})=\sum_{j=1}^{k}\T_{f(x_{0})}\pi_{j}^{-1}(\pi_{j}(f(x_{0})))=\T_{f(x_{0})}Y
\end{align*}
holds.
Thus \cite[Corollary 4.1]{Kusakabe2021a} implies that $Y$ is an Oka manifold.
\end{proof}

\begin{proof}[Proof of Theorem \ref{theorem:tame}]
We may assume that there exists $C>0$ such that
\begin{align*}
S\subset\left\{(z,w)\in\C^{n-2}\times\C^{2}:\|w\|\leq C(1+\|z\|)\right\}
\end{align*}
from the beginning.
Then for any $(n-2)$-dimensional complex linear subspace $V$ of $\C^{n}$ which is sufficiently close to $\{(z,w)\in\C^{n-2}\times\C^{2}:w=0\}$ (in the complex Grassmannian), the restriction $\pi|_{S}:S\to V$ of the orthogonal projection $\pi:\C^{n}\to V$ is a proper map.
Thus for any $z\in\C^{n}$ there exist orthogonal projections $\pi_{j}:\C^{n}\to V_{j}$ to $(n-2)$-dimensional complex linear subspaces $V_{j}\subset\C^{n}$ $(j=1,\ldots,k)$ such that
\begin{itemize}
\item the restrictions $\pi_{j}|_{S}:S\to V_{j}$ are proper, and
\item $\T_{z}\C^{n}=\sum_{j=1}^{k}\T_{z}\pi_{j}^{-1}(\pi_{j}(z))$.
\end{itemize}
Then Theorem \ref{theorem:main_relative} implies that the restrictions $\pi_{j}|_{\C^{n}\setminus S}:\C^{n}\setminus S\to V_{j}$ enjoy the Oka property.
Hence the complement $\C^{n}\setminus S$ is Oka by Lemma \ref{lemma:span}.
\end{proof}

Next, we give the proof of Corollary \ref{corollary:scc} in the introduction.
Let us recall the following localization principle for Oka manifolds.
Here, a subset of $Y$ is said to be \emph{Zariski open} if its complement is a closed complex subvariety.

\begin{theorem}[{cf. \cite[Theorem 1.4]{Kusakabe2021a}}]
\label{theorem:localization}
Let $Y$ be a complex manifold.
Assume that each point of $Y$ has a Zariski open Oka neighborhood.
Then $Y$ is an Oka manifold.
\end{theorem}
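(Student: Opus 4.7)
My plan is to derive the conclusion from the convex-ellipticity characterization in Theorem \ref{theorem:elliptic_characterization}: it suffices to show that for every compact convex set $K \subset \C^{n}$ $(n \in \N)$ and every holomorphic map $f : \Op K \to Y$ there exists a dominating global spray $s : \Op K \times \C^{N} \to Y$ over $f$.

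First, I would use compactness of $f(K)$ to cover it by finitely many Zariski-open Oka subsets $U_{1}, \ldots, U_{m} \subset Y$, whose complements are closed complex subvarieties of $Y$. The preimages $f^{-1}(U_{i})$ then form an open cover of $K$, so by convexity and compactness $K$ can be subdivided into finitely many compact convex pieces $K = K_{1} \cup \cdots \cup K_{r}$ (with each intersection of adjacent pieces again compact convex) such that $f(K_{j}) \subset U_{i_{j}}$ for some index $i_{j}$. Since $U_{i_{j}}$ is Oka, Theorem \ref{theorem:elliptic_characterization} applied to $f|_{\Op K_{j}} : \Op K_{j} \to U_{i_{j}}$ produces a dominating global spray $s_{j} : \Op K_{j} \times \C^{N_{j}} \to U_{i_{j}} \subset Y$ over $f|_{\Op K_{j}}$.

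Next, I would merge these local sprays into one global spray over $f$ by an inductive Cartan-type splitting procedure, combining two adjacent pieces at a time. Applying Lemma \ref{lemma:tubular_neighborhood} to the trivial fibration $\Op K \times Y \to \Op K$ with the section $z \mapsto (z, f(z))$ realizes a tubular neighborhood of the graph of $f$ as a biholomorphic image $\iota : \Op K \times \B^{N} \to Y$, so that any spray over $f$ sufficiently close to the trivial one is encoded by a small holomorphic $\B^{N}$-valued map on its base. On a convex Cartan pair $(A, B)$ with $A \cap B$ compact convex, additive Oka--Weil splitting on a Stein neighborhood of $A \cap B$ rewrites the transition between two sprays as a sum of small holomorphic corrections extending to $A$ and to $B$ respectively; adjusting each spray by its correction glues them to a single dominating spray on $\Op(A \cup B)$. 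Iterating over the subdivision $K_{1}, \ldots, K_{r}$ yields the desired global dominating spray.

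The main obstacle I anticipate is in this merging step, since the sprays $s_{j}$ a priori take values in different Zariski-open subsets $U_{i_{j}}$ and one must keep the merged spray both inside $Y$ and dominating at every point of $f(K)$. The Zariski-open hypothesis is essential here: each complement $Y \setminus U_{i}$ is a closed complex-analytic subvariety, a removable obstruction for the additive holomorphic splitting carried out through $\iota$, so small corrections valued in $\B^{N}$ can be performed without the merged spray meeting any $Y \setminus U_{i}$ in a neighborhood of $f(K_{j} \cap K_{j'})$ where adjacent pieces overlap. Once the merging succeeds, convex ellipticity of $Y$ is established, and Theorem \ref{theorem:elliptic_characterization} yields the Oka property.
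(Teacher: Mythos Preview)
The paper does not give its own proof of Theorem~\ref{theorem:localization}; the result is quoted as \cite[Theorem~1.4]{Kusakabe2020a} and then invoked in the proof of Corollary~\ref{corollary:scc}. So there is no in-paper argument to compare against.

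As for your proposal, the merging step does not work as written. The additive Oka--Weil splitting you describe only makes sense after transporting the sprays into the local chart $\iota:\Op K\times\B^{N}\to Y$, and $\iota$ captures each $s_{j}$ only for parameters $w$ near $0$, where $s_{j}(z,w)$ remains close to $f(z)$. On an overlap $K_{j}\cap K_{j'}$ the global sprays $s_{j}$ and $s_{j'}$ agree only at $w=0$ and are otherwise unrelated, so there is no small transition to split. Your procedure therefore yields at best a \emph{local} dominating spray over $f$ on $\Op K$, not the \emph{global} one (parameter space all of $\C^{N}$) that convex ellipticity demands, and on an arbitrary complex manifold $Y$ there is no mechanism to promote the former to the latter. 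Your remark that $Y\setminus U_{i}$ is a ``removable obstruction for the additive holomorphic splitting'' does not address this: the splitting you describe is of $\B^{N}$-valued data inside the chart $\iota$, where the subvarieties $Y\setminus U_{i}$ never enter.

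The argument in \cite{Kusakabe2020a} avoids merging altogether by working with the pointwise spanning criterion behind \cite[Corollary~4.1]{Kusakabe2020a} (which the present paper also uses in the proof of Lemma~\ref{lemma:span}): it suffices, for every Stein $X$, holomorphic $f:X\to Y$ and point $x_{0}\in X$, to produce global sprays over $f$ on $X$ whose differentials at $x_{0}$ together span $\T_{f(x_{0})}Y$. One chooses a single Zariski-open Oka neighborhood $U\ni f(x_{0})$ and works on the Stein manifold $X\setminus f^{-1}(Y\setminus U)$, which $f$ maps into the Oka manifold $U$; the Zariski hypothesis is used exactly here, to make $f^{-1}(Y\setminus U)$ analytic so that its complement is again Stein. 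No subdivision of $K$ into convex pieces and no gluing of unrelated sprays is needed.
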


\begin{proof}[Proof of Corollary \ref{corollary:scc}]
Take an arbitrary point $p\in\C^{n}\setminus C$.
After some unitary change of coordinates, we may assume that
\begin{itemize}
\item there exists a point $q\in C$ such that $(\{0\}\times\C^{n-1})\cap C=\{q\}$, and
\item $p\in\C^{*}\times\C^{n-1}$.
\end{itemize}
Recall that every rectifiable arc in $\C^{n}$ is polynomially convex (cf. \cite[Corollary 3.1.2]{Stout2007}).
Thus $C\setminus\{q\}\subset\C^{*}\times\C^{n-1}$ is a closed $\cO(\C^{*}\times\C^{n-1})$-convex set since it is exhausted by rectifiable arcs.
Consider the exponential map
\begin{align*}
\pi:\C\times\C^{n-1}\to\C^{*}\times\C^{n-1},\quad\pi(z,w)=(\exp z,w).
\end{align*}
Note that the inverse image $S=\pi^{-1}(C\setminus\{q\})\subset\C^{n}$ is polynomially convex and satisfies the assumption in Theorem \ref{theorem:tame}.
Thus Theorem \ref{theorem:tame} implies that the complement $\C^{n}\setminus S$ is Oka.
Since
\begin{align*}
\pi|_{\C^{n}\setminus S}:\C^{n}\setminus S\to(\C^{*}\times\C^{n-1})\setminus C
\end{align*}
is a holomorphic covering map, the complement $(\C^{*}\times\C^{n-1})\setminus C$ is also Oka (cf. \cite[Proposition 5.6.3]{Forstneric2017}).
Since it is a Zariski open Oka neighborhood of $p$, the localization principle (Theorem \ref{theorem:localization}) implies that the complement $\C^{n}\setminus C$ is Oka.
\end{proof}

Note that Theorem \ref{theorem:main} and the fact used in the above proof also imply the following.

\begin{corollary}
For any rectifiable arc $C\subset\C^{n}$ $(n>1)$, the complement $\C^{n}\setminus C$ is Oka.
\end{corollary}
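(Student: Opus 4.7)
The plan is to reduce directly to Corollary \ref{corollary:polynomial} via the polynomial convexity of rectifiable arcs. Specifically, a rectifiable arc $C \subset \C^{n}$ is, by definition, the continuous injective image of a compact interval with finite length, so $C$ is a compact subset of $\C^{n}$. The key external input is the result already cited in the proof of Corollary \ref{corollary:scc} (see Stout, Corollary 3.1.2 of \emph{Polynomial Convexity}): every rectifiable arc in $\C^{n}$ is polynomially convex.

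With these two observations in hand, $C$ is a compact polynomially convex subset of $\C^{n}$. Applying Corollary \ref{corollary:polynomial} (itself an immediate consequence of Theorem \ref{theorem:main} together with the density property of $\C^{n}$ for $n>1$), we conclude that $\C^{n}\setminus C$ is Oka.

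There is essentially no obstacle in this argument. The only subtlety is that, unlike the case of a simple closed curve treated in Corollary \ref{corollary:scc}, we do not need to pass to the exponential cover in order to remove a point and reduce to the tame situation of Theorem \ref{theorem:tame}: a rectifiable arc is already compact and globally polynomially convex, so the hypothesis of Corollary \ref{corollary:polynomial} applies verbatim. This is why the result holds in the full range $n>1$, rather than only for $n\geq 3$ as in the simple closed curve case.
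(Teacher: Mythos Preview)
Your proof is correct and matches the paper's own argument: the paper simply remarks that Theorem~\ref{theorem:main} together with the fact that every rectifiable arc in $\C^{n}$ is polynomially convex (Stout, Corollary~3.1.2) yields the result, which is exactly your reduction via Corollary~\ref{corollary:polynomial}. Your additional commentary explaining why no exponential-cover trick is needed here (in contrast to Corollary~\ref{corollary:scc}) is accurate and helpful.
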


It is known that every closed algebraic subvariety $S\subset\C^{n}$ of codimension at least two satisfies the assumption in Theorem \ref{theorem:tame} (cf. \cite[\S7.4]{Chirka1989}).
Thus it follows that the complement $\C^{n}\setminus S$ is Oka, which was already observed by Gromov \cite[\S0.5.B]{Gromov1989}.
Since every closed complex subvariety of a Stein space admits a basis of closed holomorphically convex neighborhoods (cf. \cite[Proposition 2.1]{Coltoiu1986}), we can also obtain the following corollary by Theorem \ref{theorem:tame}.

\begin{corollary}
Every closed algebraic subvariety $S\subset\C^{n}$ of codimension at least two admits a basis of closed neighborhoods whose complements are Oka.
\end{corollary}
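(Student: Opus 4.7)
The plan is to combine the two facts recalled just before the statement: that any closed algebraic subvariety of codimension at least two in $\C^n$ already satisfies the growth hypothesis of Theorem \ref{theorem:tame} (via \cite[\S7.4]{Chirka1989}), and that any closed complex subvariety of a Stein space admits a basis of closed holomorphically convex neighborhoods (Colţoiu). The strategy is to refine Colţoiu's basis so that each member also inherits the growth hypothesis, and then to invoke Theorem \ref{theorem:tame} on each such neighborhood.

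Concretely, I would first fix a holomorphic automorphism $\varphi$ of $\C^n$ and a constant $C>0$ such that
\begin{align*}
\varphi(S)\subset\bigl\{(z,w)\in\C^{n-2}\times\C^{2}:\|w\|\leq C(1+\|z\|)\bigr\},
\end{align*}
and set $W:=\varphi^{-1}\bigl(\{(z,w):\|w\|<(C+1)(1+\|z\|)\}\bigr)$, which is an open neighborhood of $S$. Given an arbitrary open neighborhood $U_0$ of $S$, the intersection $U:=U_0\cap W$ is again open and contains $S$. Applying Colţoiu's theorem \cite[Proposition 2.1]{Coltoiu1986} inside $U$ produces a closed $\cO(\C^n)$-convex neighborhood $N$ of $S$ with $N\subset U$. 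Since polynomial convexity coincides with $\cO(\C^n)$-convexity for closed subsets of $\C^n$ (polynomials being dense in the entire functions on compact sets), $N$ is polynomially convex. Moreover $\varphi(N)\subset\{\|w\|\leq (C+1)(1+\|z\|)\}$, so $N$ satisfies the hypothesis of Theorem \ref{theorem:tame} with constant $C+1$, and that theorem immediately gives that $\C^n\setminus N$ is Oka.

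As $U_0$ ranges over all open neighborhoods of $S$, the sets $N$ constructed above form a basis of closed neighborhoods of $S$, each with Oka complement, which is exactly the claim. The proof is essentially a combination of two cited inputs, so I do not foresee any substantial obstacle; the only point that requires a moment of care is the refinement step, where one must ensure that Colţoiu's basis elements can be taken inside the prescribed open tube $W$, but this is automatic from the definition of a neighborhood basis.
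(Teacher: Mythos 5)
Your proposal is correct and follows the same approach the paper has in mind: combine the growth hypothesis of Theorem \ref{theorem:tame} (available for $S$ by the cited fact from Chirka) with Col\c{t}oiu's basis of closed $\cO(\C^n)$-convex neighborhoods, arranging via the auxiliary open tube $W$ that the basis elements inherit the growth bound. The paper leaves these details implicit in one sentence, and your refinement step correctly handles the only point that needs care.
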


For a discrete set in $\C^{n}$ $(n\geq 3)$, the assumption in Theorem \ref{theorem:tame} is equivalent to tameness introduced by Rosay and Rudin \cite{Rosay1988} (see \cite[Theorem 3.5]{Rosay1988}).
A discrete set $D\subset\C^{n}$ is said to be \emph{tame} if there exists a holomorphic automorphism $\varphi$ of $\C^{n}$ such that $\varphi(D)=\Z\times\{0\}^{n-1}$ (cf. \cite[Remarks 3.4]{Rosay1988}).
The Oka property of the complement $\C^{n}\setminus D$ of a tame discrete set $D$ was proved by Forstneri\v{c} and Prezelj \cite[Theorem 1.6]{Forstneric2002}.
For the same reason as above, the following corollary holds.

\begin{corollary}
\label{corollary:tame_neighborhood}
Every tame discrete set $D\subset\C^{n}$ $(n\geq3)$ admits a basis of closed neighborhoods whose complements are Oka.
\end{corollary}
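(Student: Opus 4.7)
The plan is to mirror the argument used for the preceding corollary on algebraic subvarieties, combining the cited proposition of Coltoiu on closed holomorphically convex neighborhood bases with Theorem~\ref{theorem:tame}. The only new input needed is the equivalence, recorded in the paragraph preceding the statement, between tameness of a discrete set in $\C^{n}$ $(n\geq 3)$ and the growth hypothesis of Theorem~\ref{theorem:tame}.

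First I would observe that a closed discrete set $D\subset\C^{n}$ is a zero-dimensional closed complex subvariety of the Stein manifold $\C^{n}$, so by the cited proposition of Coltoiu it admits a basis of closed holomorphically convex neighborhoods; on $\C^{n}$, such neighborhoods are automatically polynomially convex. Next, since $D$ is tame and $n\geq 3$, pick $\varphi\in\Aut(\C^{n})$ with $\varphi(D)=\Z\times\{0\}^{n-1}$. Splitting coordinates as $\C^{n}=\C^{n-2}\times\C^{2}$ (which is possible because $n-2\geq 1$), the image $\varphi(D)$ lies inside $\C\times\{0\}^{n-1}\subset\{(z,w)\in\C^{n-2}\times\C^{2}:w=0\}$, hence strictly inside the open region $\Omega:=\{(z,w)\in\C^{n-2}\times\C^{2}:\|w\|<1+\|z\|\}$.

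Given an arbitrary open neighborhood $U$ of $D$ in $\C^{n}$, set $U^{*}:=U\cap\varphi^{-1}(\Omega)$; this is still an open neighborhood of $D$. Apply Coltoiu inside $U^{*}$ to produce a closed holomorphically convex neighborhood $N\subset U^{*}$ of $D$. Because $\varphi(N)\subset\overline{\Omega}\subset\{\|w\|\leq 1+\|z\|\}$, the set $N$ satisfies the hypothesis of Theorem~\ref{theorem:tame} with the automorphism $\varphi$ and constant $C=1$, so Theorem~\ref{theorem:tame} yields that $\C^{n}\setminus N$ is Oka. Letting $U$ shrink through a neighborhood basis of $D$ produces the desired basis of closed neighborhoods of $D$ whose complements are Oka.

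No serious obstacle is anticipated: both main ingredients (Coltoiu's neighborhood basis theorem and Theorem~\ref{theorem:tame}) are already available, and the only mild concern---that the Coltoiu neighborhoods need not a priori lie in the growth region---is handled by intersecting $U$ with $\varphi^{-1}(\Omega)$ before invoking Coltoiu.
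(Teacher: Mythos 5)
Your argument is correct and coincides with the paper's intended reasoning, which is left implicit in the phrase ``for the same reason as above'' (Col\c{t}oiu's closed holomorphically convex neighborhood basis combined with Theorem~\ref{theorem:tame}). You also correctly make explicit the one tacit step---shrinking the Col\c{t}oiu neighborhoods into the growth region $\varphi^{-1}(\Omega)$ before invoking Theorem~\ref{theorem:tame}---which is exactly what the paper intends.
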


Related to Corollary \ref{corollary:tame_neighborhood}, Buzzard \cite{Buzzard2003} studied dominability of the complement of the $\varepsilon$-neighborhood of a tame discrete set.
His results, the question of Rosay and Rudin \cite[Question 3]{Rosay1988} and Corollary \ref{corollary:tame_neighborhood} lead us to the following question.

\begin{question}
Assume that $\varepsilon>0$ and $D\subset\C^{n}$ $(n>1)$ is a discrete set such that
\begin{align*}
\inf_{p,q\in D,\ p\neq q}\|p-q\|>2\varepsilon.
\end{align*}
Does it follow that the complement $\C^{n}\setminus\bigcup_{p\in D}\overline{\B^{n}(p,\varepsilon)}$ is Oka?
Here, $\overline{\B^{n}(p,\varepsilon)}\subset\C^{n}$ denotes the closed ball of radius $\varepsilon$ centered at $p$.
\end{question}

For the standard tame discrete sets, we can give the following positive answer as an application of Theorem \ref{theorem:main}.

\begin{corollary}
For any $n>1$ and any $0<\varepsilon<1/2$, the complement $\C^{n}\setminus\bigcup_{p\in\Z\times\{0\}^{n-1}}\overline{\B^{n}(p,\varepsilon)}$ is Oka.
\end{corollary}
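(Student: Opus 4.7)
The plan is to apply Theorem~\ref{theorem:main} to the Stein manifold $Y = \C^{*} \times \C^{n-1}$---which is known to enjoy the density property for $n\ge 2$---together with the compact set $K := q(\overline{\B^{n}(0,\varepsilon)})$, where $q\colon\C^{n}\to Y$ is the exponential covering $q(z_{1},\ldots,z_{n}) = (e^{2\pi i z_{1}},z_{2},\ldots,z_{n})$. Because the assumption $\varepsilon<1/2$ forces the balls $\overline{\B^{n}(ke_{1},\varepsilon)}$, $k\in\Z$, to be pairwise disjoint, one has $q^{-1}(K)=\bigcup_{k\in\Z}\overline{\B^{n}(ke_{1},\varepsilon)}=S$, so that $q$ restricts to a holomorphic covering map $q|_{\C^{n}\setminus S}\colon\C^{n}\setminus S\to Y\setminus K$. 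Since the Oka property lifts through holomorphic coverings by \cite[Proposition~5.6.3]{Forstneric2017}, it will suffice to show that $Y\setminus K$ is Oka, which by Theorem~\ref{theorem:main} reduces to verifying the $\cO(Y)$-convexity of $K$.

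The key step is therefore the $\cO(Y)$-convexity of $K$. First, I would observe that $q$ is injective on $\overline{\B^{n}(0,\varepsilon)}$ (as its diameter $2\varepsilon<1$) and restricts to a biholomorphism from the strip $V=\{|\Re z_{1}|<1/2\}\times\C^{n-1}$ onto $U:=q(V)=(\C\setminus(-\infty,0])\times\C^{n-1}$, taking the convex ball $\overline{\B^{n}(0,\varepsilon)}$ to $K$; in particular $K$ is $\cO(U)$-convex. Given a point $p=(p_{1},\ldots,p_{n})\in Y\setminus K$, I would produce a separating $\cO(Y)$-function in three cases. (i) If $|p_{j}|>\varepsilon$ for some $j\ge 2$, take $f(w)=w_{j}^{N}$ for large $N$. (ii) If $p_{1}$ lies outside the ``eye'' $E:=q(\overline{\mathbb{D}(0,\varepsilon)})\subset\C^{*}$, use that $E$ is $\cO(\C^{*})$-convex---its complement in $\C^{*}$ is a single connected, not-relatively-compact region---to find $G\in\cO(\C^{*})$ with $|G(p_{1})|>\sup_{E}|G|$, whence $f(w)=G(w_{1})$ separates. (iii) In the remaining case $p\in U\setminus K$ with $p_{1}\in E$, lift $p$ to $p^{*}=(q|_{V})^{-1}(p)\in V$, which satisfies $\|p^{*}\|>\varepsilon$, and apply a linear-functional separation $z\mapsto\langle z,p^{*}\rangle$ from the convex ball in $V$; the obstruction that the $z_{1}$-part of this functional is not $\Z$-invariant is overcome by replacing $z_{1}$ with $H(e^{2\pi i z_{1}})$, where $H\in\cO(\C^{*})$ is a Laurent polynomial approximating the principal branch of $\log w_{1}/(2\pi i)$ on $E$, furnished by Runge's theorem on $\C^{*}$ using the $\cO(\C^{*})$-convexity of $E$.

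The hardest step will be case (iii) of the $\cO(Y)$-convexity verification: the natural separating linear functional on $V$ uses the coordinate $z_{1}$, which is not $\Z$-invariant and therefore does not descend to $Y$. The crucial device for overcoming this is the $\cO(\C^{*})$-convexity of the eye $E$, which via Runge approximation on $\C^{*}$ produces a $\Z$-invariant entire function on $\C^{n}$ that agrees with $z_{1}$ on $\overline{\B^{n}(0,\varepsilon)}$ to any prescribed accuracy; with this substitute the linear-functional separation on $V$ transfers to an honest element of $\cO(Y)$, completing the $\cO(Y)$-convexity of $K$. Theorem~\ref{theorem:main} then gives the Oka property of $Y\setminus K$, and the covering argument from the first paragraph finishes the proof.
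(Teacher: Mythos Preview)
Your overall strategy is exactly the paper's: pass to the quotient $Y=\C^{*}\times\C^{n-1}$ via the exponential covering, apply Theorem~\ref{theorem:main} to $K=q(\overline{\B^{n}(0,\varepsilon)})$, and pull the Oka property back through the covering. The only divergence is in the verification that $K$ is $\cO(Y)$-convex. The paper disposes of this in one line: choose an open polydisc $D\supset\overline{\B^{n}(0,\varepsilon)}$ with first radius in $(\varepsilon,1/2)$, so that $q|_{D}$ is biholomorphic onto its image and $q(D)\subset Y$ is Runge; then the convexity of the ball gives $\cO(D)$-convexity, hence $\cO(q(D))$-convexity, hence $\cO(Y)$-convexity. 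Your three-case separation argument reaches the same conclusion and is correct---in particular your case~(iii), where you approximate the non-periodic coordinate $z_{1}$ by $H(e^{2\pi i z_{1}})$ via Runge on the $\cO(\C^{*})$-convex ``eye'' $E$, works because both the ball's image and the point $p_{1}$ lie in $E$, so the uniform approximation controls the error at both and preserves the strict inequality $\|p^{*}\|^{2}>\varepsilon\|p^{*}\|$. This is more laborious than the Runge-polydisc shortcut but is a valid direct argument; note that your case~(iii) is in fact doing by hand exactly what the Runge property of $q(D)$ encodes abstractly.
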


\begin{proof}
Recall that $\C^{*}\times\C^{n-1}$ enjoys the density property by the result of Varolin \cite[Theorem 4.2]{Varolin2001}.
Let us consider the exponential map
\begin{align*}
\pi:\C\times\C^{n-1}\to\C^{*}\times\C^{n-1},\quad\pi(z,w)=(\exp(2\pi iz),w).
\end{align*}
Note that there exists an open polydisc $D\subset\C^{n}$ such that $\overline{\B^{n}(0,\varepsilon)}\subset D$, the restriction $\pi|_{D}:D\to\pi(D)$ is biholomorphic and $\pi(D)\subset\C^{*}\times\C^{n-1}$ is Runge.
Thus the image $\pi(\overline{\B^{n}(0,\varepsilon)})\subset\C^{*}\times\C^{n-1}$ is a compact $\cO(\C^{*}\times\C^{n-1})$-convex set.
Then it follows that the complement $(\C^{*}\times\C^{n-1})\setminus\pi(\overline{\B^{n}(0,\varepsilon)})$ is Oka from Theorem \ref{theorem:main}.
This implies that its universal covering
\begin{align*}
\C^{n}\setminus\bigcup_{p\in\Z\times\{0\}^{n-1}}\overline{\B^{n}(p,\varepsilon)}=\C^{n}\setminus\pi^{-1}\left(\pi\left(\overline{\B^{n}(0,\varepsilon)}\right)\right)
\end{align*}
is also Oka (cf. \cite[Proposition 5.6.3]{Forstneric2017}).
\end{proof}

In the rest, we consider graph complements (see \cite[\S5.4]{Kusakabe2021b} for related results).
The following is an immediate consequence of Theorem \ref{theorem:main_relative}.

\begin{corollary}
\label{corollary:graph_complement}
Assume that $Y$ is a Stein space, $\pi:Y\to B$ is a holomorphic submersion with the density property and $f_{j}:B\to Y$ $(j=1,\ldots,k)$ are holomorphic sections of $\pi$.
Then the restriction $\pi|_{Y\setminus\bigcup_{j=1}^{k}f_{j}(B)}:Y\setminus\bigcup_{j=1}^{k}f_{j}(B)\to B$ enjoys the Oka property.
\end{corollary}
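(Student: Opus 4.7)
The plan is to verify that $S := \bigcup_{j=1}^{k} f_{j}(B)$ satisfies the hypotheses of Theorem \ref{theorem:main_relative} and then invoke that theorem. First, I would observe that every holomorphic section of a holomorphic submersion is automatically a closed embedding, so each $f_{j}(B) \subset Y$ is a closed complex subvariety and $S$ is a closed complex analytic subset of $Y$. In particular, $f_{1}$ identifies $B$ with a closed complex subvariety of the Stein manifold $Y$, so $B$ is itself Stein. Properness of $\pi|_{S}: S \to B$ is then immediate, since $(\pi|_{S})^{-1}(K) = \bigcup_{j=1}^{k} f_{j}(K)$ is a finite union of compact sets for every compact $K \subset B$.

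The main technical step is to produce, for each $b_{0} \in B$, an open neighborhood $U \subset B$ of $b_{0}$ with $\pi^{-1}(U)$ Stein. Fixing a strictly plurisubharmonic exhaustion $\rho$ of $Y$ and a plurisubharmonic exhaustion $\sigma$ of the Stein space $B$, I would take $U = \{b \in B : \sigma(b) < c\}$ for some $c > \sigma(b_{0})$ and check that $\rho + (c - \sigma \circ \pi)^{-1}$ is a strictly plurisubharmonic exhaustion of $\pi^{-1}(U)$: the second summand is plurisubharmonic because $(c - t)^{-1}$ is convex and increasing on $(-\infty, c)$ and $\sigma \circ \pi$ is plurisubharmonic, and it tends to $+\infty$ as $\sigma \circ \pi \to c$, while $\rho$ provides the exhaustion on each slab $\pi^{-1}(K')$ with $K' \subset U$ compact (since $\{\rho \leq M\} \cap \pi^{-1}(K')$ is compact in $Y$ for every $M$). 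Hence $\pi^{-1}(U)$ is Stein, and on such a $U$ the set $S \cap \pi^{-1}(U) = \bigcup_{j=1}^{k} f_{j}(U)$ is a closed complex subvariety of the Stein space $\pi^{-1}(U)$, hence $\cO(\pi^{-1}(U))$-convex by the standard application of Cartan's Theorem A to its ideal sheaf. This completes the verification that $S$ is a family of compact holomorphically convex sets.

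It remains to confirm the density-property hypothesis of Theorem \ref{theorem:main_relative} for the chosen $U$. But the restriction $\pi^{-1}(U) \to U$ is the pullback of $\pi: Y \to B$ along the open embedding $U \hookrightarrow B$, so Lemma \ref{lemma:pullback} applies (both $U$ and $\pi^{-1}(U)$ being Stein) and yields the density property of the restriction. Theorem \ref{theorem:main_relative} then delivers the Oka property of $\pi|_{Y \setminus S}: Y \setminus S \to B$, which is the claim. I expect the only place requiring genuine care is the construction of the Stein preimages $\pi^{-1}(U)$ above; everything else is either formal from the fact that the $f_{j}$ are sections, or a direct citation.
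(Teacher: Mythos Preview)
Your argument is correct and follows the paper's intended route—verify the hypotheses of Theorem~\ref{theorem:main_relative} for $S=\bigcup_{j}f_{j}(B)$—but the step you single out as ``requiring genuine care'' is unnecessary. Since $Y$ itself is assumed Stein and $\pi:Y\to B$ is assumed to have the density property, you may simply take $U=B$ in both the definition of a family of compact holomorphically convex sets and in the hypothesis of Theorem~\ref{theorem:main_relative}: then $\pi^{-1}(U)=Y$ is Stein, the restriction $\pi^{-1}(U)\to U$ is $\pi$ itself and has the density property by hypothesis, and $S\cap\pi^{-1}(U)=S$ is a closed complex subvariety of the Stein space $Y$, hence $\cO(Y)$-convex. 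This is why the paper records the corollary as an immediate consequence of Theorem~\ref{theorem:main_relative} with no further argument. Your plurisubharmonic-exhaustion construction of Stein preimages $\pi^{-1}(U)$ and the subsequent appeal to Lemma~\ref{lemma:pullback} are valid, but superfluous in this setting.
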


In particular, Corollary \ref{corollary:graph_complement} implies the following Oka principle.

\begin{corollary}
Let $\pi:Y\to B$ be a holomorphic submersion between Stein spaces, $f_{j}:B\to Y$ $(j=1,\ldots,k)$ be holomorphic sections of $\pi$ and $\varphi_{0}:B\to Y$ be a continuous section of $\pi$.
Assume that $\pi$ enjoys the density property and $\varphi_{0}(b)\neq f_{j}(b)$ for all $j=1,\ldots,k$ and $b\in B$.
Then there exists a homotopy $\varphi_{t}:B\to Y$ $(t\in[0,1])$ of continuous sections of $\pi$ such that
\begin{enumerate}
\item $\varphi_{t}(b)\neq f_{j}(b)$ for all $j=1,\ldots,k$, $b\in B$ and $t\in[0,1]$, and
\item $\varphi_{1}:B\to Y$ is a holomorphic section.
\end{enumerate}
\end{corollary}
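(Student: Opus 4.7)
The plan is to reduce this Oka-principle statement to the immediately preceding graph-complement result and then invoke the standard section-valued Oka principle.

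First I would set $Y':=Y\setminus\bigcup_{j=1}^{k}f_{j}(B)$. Because each $f_{j}$ is a holomorphic section of the submersion $\pi$, its graph $f_{j}(B)\subset Y$ is a closed complex subvariety, so $Y'$ is open in $Y$ and $\pi':=\pi|_{Y'}:Y'\to B$ is again a holomorphic submersion between complex spaces. Corollary \ref{corollary:graph_complement} applies directly and tells us that $\pi'$ enjoys the Oka property.

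Next I would recast the data: the hypothesis $\varphi_{0}(b)\neq f_{j}(b)$ for every $b\in B$ and every $j$ is exactly the statement that $\varphi_{0}$ takes values in $Y'$, i.e. $\varphi_{0}:B\to Y'$ is a continuous section of $\pi'$. Moreover, any continuous homotopy $\varphi_{t}:B\to Y'$ of sections of $\pi'$ obeys condition (1) of the corollary automatically, since its values avoid each $f_{j}(B)$ by construction.

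Finally I would invoke the standard parametric Oka principle for sections of a submersion with the Oka property over a Stein base: since $B$ is Stein and $\pi'$ enjoys the Oka property, every continuous section of $\pi'$ is homotopic, through continuous sections of $\pi'$, to a holomorphic section. This is one of the equivalent formulations of the Oka property collected in \cite[Corollary 5.5]{Kusakabe}. Applied to $\varphi_{0}$ it produces the desired homotopy $\varphi_{t}$ with $\varphi_{1}$ holomorphic, giving both conditions (1) and (2) at once.

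I do not foresee any serious obstacle: the substance of the argument is carried entirely by Corollary \ref{corollary:graph_complement}, and the only thing worth checking is that the flavor of the Oka property being quoted really does give the bare homotopy from a continuous to a holomorphic section (without auxiliary approximation or interpolation constraints that our setting cannot meet) — which is the basic Oka principle for sections of a submersion with the Oka property over a Stein base.
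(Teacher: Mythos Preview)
Your proposal is correct and matches the paper's approach: the paper gives no explicit proof but prefaces the statement with ``In particular, Corollary \ref{corollary:graph_complement} implies the following Oka principle,'' which is exactly your reduction via $Y'=Y\setminus\bigcup_{j}f_{j}(B)$ together with the basic Oka principle for sections over a Stein base.
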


For a complex manifold $Y$, let us consider the \emph{configuration space} of ordered $n$-tuples of points in $Y$: $F(Y,n)=\{(y_{1},\ldots,y_{n})\in Y^{n}:y_{j}\neq y_{k}\ \text{if}\ j\neq k\}$.
Kutzschebauch and Ramos-Peon proved that the configuration spaces $F(Y,n)$ $(n\in\N)$ of a Stein manifold $Y$ with the density property are Oka \cite[Theorem 3.1]{Kutzschebauch2017}.
Recall that if a surjective holomorphic Serre fibration $\pi:E\to B$ between complex manifolds enjoys the Oka property, then $E$ is an Oka manifold if and only if $B$ is an Oka manifold (cf. \cite[Corollary 2.51]{Forstneric2013}).
Corollary \ref{corollary:graph_complement} implies the following stronger result.

\begin{corollary}
Assume that $Y$ is a Stein manifold with the density property.
Then for each $n\in\N$ the projection
\begin{align*}
F(Y,n+1)\to F(Y,n),\quad (y_{1},\ldots,y_{n+1})\mapsto(y_{1},\ldots,y_{n})
\end{align*}
enjoys the Oka property.
\end{corollary}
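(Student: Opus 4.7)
The plan is to realize the projection $F(Y,n+1)\to F(Y,n)$ as the graph-complement of $n$ holomorphic sections inside the trivial bundle $F(Y,n)\times Y\to F(Y,n)$ and then apply Corollary \ref{corollary:graph_complement} locally over the base. Via the identification $(y_1,\ldots,y_{n+1})\mapsto((y_1,\ldots,y_n),y_{n+1})$, the given projection becomes the restriction of the trivial submersion $\pi\colon F(Y,n)\times Y\to F(Y,n)$ to the complement of $\bigcup_{j=1}^{n}f_j(F(Y,n))$, where the obvious holomorphic sections are
\[
f_j\colon F(Y,n)\to F(Y,n)\times Y,\quad (y_1,\ldots,y_n)\mapsto((y_1,\ldots,y_n),y_j).
\]

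One cannot invoke Corollary \ref{corollary:graph_complement} globally, because the total space $F(Y,n)\times Y$ need not be Stein: already $F(Y,2)$ fails to be Stein when $\dim Y\geq 2$, since holomorphic functions extend across the diagonal by Hartogs. The way around this is that the Oka property of a holomorphic submersion is equivalent to convex ellipticity (Theorem \ref{theorem:elliptic_characterization}), which is a local condition on the base. Hence it suffices to produce an open cover $\{U_\alpha\}$ of $F(Y,n)$ such that $\pi\colon F(Y,n+1)\cap\pi^{-1}(U_\alpha)\to U_\alpha$ enjoys the Oka property for every $\alpha$.

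Given a point $(y_1^{0},\ldots,y_n^{0})\in F(Y,n)$, I would choose pairwise disjoint open Stein neighborhoods $V_j\subset Y$ of $y_j^{0}$ (available since $Y$ is Stein and the $y_j^{0}$'s are distinct) and take $U=V_1\times\cdots\times V_n\subset F(Y,n)$. Then $U$ is Stein, the restriction $\pi^{-1}(U)=U\times Y$ is Stein, the trivial bundle $U\times Y\to U$ enjoys the density property by Example \ref{example:density}, and the images of $f_1|_U,\ldots,f_n|_U$ are pairwise disjoint because $V_j\cap V_k=\emptyset$ for $j\neq k$. Applying Corollary \ref{corollary:graph_complement} to this situation yields the Oka property for $\pi\colon(U\times Y)\setminus\bigcup_{j=1}^{n}f_j(U)\to U$, whose domain is exactly $F(Y,n+1)\cap\pi^{-1}(U)$. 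Since such products cover $F(Y,n)$, the conclusion follows. The only nontrivial observation in the argument is thus the passage from the non-Stein global total space to the local Stein pieces $U\times Y$; beyond that, all verifications are elementary.
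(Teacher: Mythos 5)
Your proof is correct and follows the approach the paper intends, namely realizing $F(Y,n+1)\to F(Y,n)$ as the graph complement inside $F(Y,n)\times Y\to F(Y,n)$ and reducing to Corollary \ref{corollary:graph_complement}. You rightly observe that $F(Y,n)\times Y$ is not Stein (since $\dim Y\geq 2$ for any Stein manifold with the density property, $F(Y,n)$ already fails to be Stein), and you supply the required localization over Stein product neighborhoods $V_1\times\cdots\times V_n$ that the paper's terse remark leaves implicit; equivalently, one may apply Theorem \ref{theorem:main_relative} directly, since its hypotheses on the base and on the family of compact holomorphically convex sets are already local.
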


Recall that for a continuous function $f:\D\to\C$, the projection $(\D\times\C)\setminus\Gamma_{f}\to\D$ from the complement of the graph $\Gamma_{f}$ of $f$ enjoys the Oka property if and only if $f$ is holomorphic (cf. \cite[Corollary 7.4.10]{Forstneric2017}).
By Theorem \ref{theorem:main_relative}, however, there exists a non-holomorphic real analytic map $f:\D\to\C^{n}$ $(n>1)$ such that the projection $(\D\times\C^{n})\setminus\Gamma_{f}\to\D$ enjoys the Oka property (e.g. $f(z)=(\bar{z},0,\ldots,0)$).
This phenomenon leads us to the following question.

\begin{question}
Assume that $f:\D\to\C^{n}$ $(n>1)$ is a continuous map.
Is there a characterization of the Oka property of the projection $(\D\times\C)\setminus\Gamma_{f}\to\D$ by some function-theoretic property of $f$?
\end{question}

%
%

\section*{Acknowledgement}
I wish to thank Katsutoshi Yamanoi and Franc Forstneri\v{c} for many helpful comments and suggestions.
I am also grateful to the anonymous referee for a careful reading of the manuscript and thoughtful comments which improved the paper.
This work was supported by JSPS KAKENHI Grant Number JP18J20418.

%
%


\begin{thebibliography}{10}

\bibitem{Abbondandolo}
A.~Abbondandolo, L.~Arosio, J.~E. Forn{\ae}ss, P.~Majer, H.~Peters, J.~Raissy,
  and L.~Vivas.
\newblock A survey on non-autonomous basins in several complex variables.
\newblock arXiv:1311.3835.

\bibitem{Andersen1992}
E.~Anders\'{e}n and L.~Lempert.
\newblock On the group of holomorphic automorphisms of {${\Bbb C}^n$}.
\newblock {\em Invent. Math.}, 110(2):371--388, 1992.

\bibitem{Andrist2018}
R.~B. Andrist and F.~Kutzschebauch.
\newblock The fibred density property and the automorphism group of the
  spectral ball.
\newblock {\em Math. Ann.}, 370(1-2):917--936, 2018.

\bibitem{Andrist2016}
R.~B. Andrist, N.~Shcherbina, and E.~F. Wold.
\newblock The {H}artogs extension theorem for holomorphic vector bundles and
  sprays.
\newblock {\em Ark. Mat.}, 54(2):299--319, 2016.

\bibitem{Buzzard2003}
G.~T. Buzzard.
\newblock Tame sets, dominating maps, and complex tori.
\newblock {\em Trans. Amer. Math. Soc.}, 355(6):2557--2568, 2003.

\bibitem{Chirka1989}
E.~M. Chirka.
\newblock {\em Complex analytic sets}, volume~46 of {\em Mathematics and its
  Applications (Soviet Series)}.
\newblock Kluwer Academic Publishers Group, Dordrecht, 1989.
\newblock Translated from the Russian by R. A. M. Hoksbergen.

\bibitem{Coltoiu1986}
M.~Col\c{t}oiu and N.~Mihalache.
\newblock On the homology groups of {S}tein spaces and {R}unge pairs.
\newblock {\em J. Reine Angew. Math.}, 371:216--220, 1986.

\bibitem{Eliashberg1992}
Y.~Eliashberg and M.~Gromov.
\newblock Embeddings of {S}tein manifolds of dimension {$n$} into the affine
  space of dimension {$3n/2+1$}.
\newblock {\em Ann. of Math. (2)}, 136(1):123--135, 1992.

\bibitem{Flenner2016}
H.~Flenner, S.~Kaliman, and M.~Zaidenberg.
\newblock A {G}romov-{W}inkelmann type theorem for flexible varieties.
\newblock {\em J. Eur. Math. Soc. (JEMS)}, 18(11):2483--2510, 2016.

\bibitem{Fornaess2016}
J.~E. Forn{\ae}ss and E.~F. Wold.
\newblock Non-autonomous basins with uniform bounds are elliptic.
\newblock {\em Proc. Amer. Math. Soc.}, 144(11):4709--4714, 2016.

\bibitem{Forstneric2003d}
F.~Forstneri\v{c}.
\newblock The homotopy principle in complex analysis: a survey.
\newblock In {\em Explorations in complex and {R}iemannian geometry}, volume
  332 of {\em Contemp. Math.}, pages 73--99. Amer. Math. Soc., Providence, RI,
  2003.

\bibitem{Forstneric2006}
F.~Forstneri\v{c}.
\newblock Runge approximation on convex sets implies the {O}ka property
\newblock {\em Ann. of Math. (2)}, 163(2):689--707, 2006.

\bibitem{Forstneric2009}
F.~Forstneri\v{c}.
\newblock Oka manifolds.
\newblock {\em C. R. Math. Acad. Sci. Paris}, 347(17-18):1017--1020, 2009.

\bibitem{Forstneric2013}
F.~Forstneri\v{c}.
\newblock Oka manifolds: from {O}ka to {S}tein and back.
\newblock {\em Ann. Fac. Sci. Toulouse Math. (6)}, 22(4):747--809, 2013.
\newblock With an appendix by Finnur L\'{a}russon.

\bibitem{Forstneric2017}
F.~Forstneri\v{c}.
\newblock {\em Stein manifolds and holomorphic mappings (The homotopy principle
  in complex analysis)}, volume~56 of {\em Ergebnisse der Mathematik und ihrer
  Grenzgebiete. 3. Folge. A Series of Modern Surveys in Mathematics}.
\newblock Springer, Cham, second edition, 2017.

\bibitem{Forstneric2017a}
F.~Forstneri\v{c}.
\newblock Surjective holomorphic maps onto {O}ka manifolds.
\newblock In {\em Complex and symplectic geometry}, volume~21 of {\em Springer
  INdAM Ser.}, pages 73--84. Springer, Cham, 2017.

\bibitem{Forstneric2019}
F.~Forstneri\v{c}.
\newblock Proper holomorphic immersions into {S}tein manifolds with the density
  property.
\newblock {\em J. Anal. Math.}, 139(2):585--596, 2019.

\bibitem{Forstneric2022}
F.~Forstneri\v{c} and F.~Kutzschebauch.
\newblock The first thirty years of {A}nders\'{e}n-{L}empert theory
\newblock {\em Anal. Math.}, 48(2):489--544, 2022.

\bibitem{Forstneric2001a}
F.~Forstneri\v{c} and J.~Prezelj.
\newblock Extending holomorphic sections from complex subvarieties.
\newblock {\em Math. Z.}, 236(1):43--68, 2001.

\bibitem{Forstneric2002}
F.~Forstneri\v{c} and J.~Prezelj.
\newblock Oka's principle for holomorphic submersions with sprays.
\newblock {\em Math. Ann.}, 322(4):633--666, 2002.

\bibitem{Forstneric2014}
F.~Forstneri\v{c} and T.~Ritter.
\newblock Oka properties of ball complements.
\newblock {\em Math. Z.}, 277(1-2):325--338, 2014.

\bibitem{Forstneric2015}
F.~Forstneri\v{c} and E.~F. Wold.
\newblock Fatou-{B}ieberbach domains in {$\Bbb{C}^n\setminus\Bbb{R}^k$}.
\newblock {\em Ark. Mat.}, 53(2):259--270, 2015.

\bibitem{Forstneric2020a}
F.~Forstneri\v{c} and E.~F. Wold.
\newblock Holomorphic families of {F}atou-{B}ieberbach domains and applications to {O}ka manifolds.
\newblock {\em Math. Res. Lett.}, 27(6):1697--1706, 2020.

\bibitem{Forstnerica}
F.~Forstneri\v{c} and E.~F. Wold.
\newblock Oka domains in {E}uclidean spaces.
\newblock arXiv:2203.12883.

\bibitem{Grauert1957}
H.~Grauert.
\newblock Holomorphe {F}unktionen mit {W}erten in komplexen {L}ieschen
  {G}ruppen.
\newblock {\em Math. Ann.}, 133:450--472, 1957.

\bibitem{Grauert1958b}
H.~Grauert.
\newblock Analytische {F}aserungen \"{u}ber holomorph-vollst\"{a}ndigen
  {R}\"{a}umen.
\newblock {\em Math. Ann.}, 135:263--273, 1958.

\bibitem{Grauert1963}
H.~Grauert and H.~Kerner.
\newblock Approximation von holomorphen {S}chnittfl\"{a}chen in
  {F}aserb\"{u}ndeln mit homogener {F}aser.
\newblock {\em Arch. Math. (Basel)}, 14:328--333, 1963.

\bibitem{Gromov1986}
M.~Gromov.
\newblock {\em Partial differential relations}, volume~9 of {\em Ergebnisse der
  Mathematik und ihrer Grenzgebiete (3)}.
\newblock Springer-Verlag, Berlin, 1986.

\bibitem{Gromov1989}
M.~Gromov.
\newblock Oka's principle for holomorphic sections of elliptic bundles.
\newblock {\em J. Amer. Math. Soc.}, 2(4):851--897, 1989.

\bibitem{Hanysz2014}
A.~Hanysz.
\newblock Oka properties of some hypersurface complements.
\newblock {\em Proc. Amer. Math. Soc.}, 142(2):483--496, 2014.

\bibitem{Ivarsson2012a}
B.~Ivarsson and F.~Kutzschebauch.
\newblock Holomorphic factorization of mappings into {${\rm SL}_n(\Bbb C)$}.
\newblock {\em Ann. of Math. (2)}, 175(1):45--69, 2012.

\bibitem{Kusakabe2020}
Y.~Kusakabe.
\newblock Oka complements of countable sets and nonelliptic {O}ka manifolds.
\newblock {\em Proc. Amer. Math. Soc.}, 148(3):1233--1238, 2020.

\bibitem{Kusakabe2021a}
Y.~Kusakabe.
\newblock Elliptic characterization and localization of {O}ka manifolds.
\newblock {\em Indiana Univ. Math. J.}, 70(3):1039--1054, 2021.

\bibitem{Kusakabe2021b}
Y.~Kusakabe.
\newblock Elliptic characterization and unification of {O}ka maps.
\newblock {\em Math. Z.}, 298(3-4):1735--1750, 2021.

\bibitem{Kusakabe2021}
Y.~Kusakabe.
\newblock Thom's jet transversality theorem for regular maps.
\newblock {\em J. Geom. Anal.}, 31(6):6031--6041, 2021.

\bibitem{Kutzschebauch2017}
F.~Kutzschebauch and A.~Ramos-Peon.
\newblock An {O}ka principle for a parametric infinite transitivity property.
\newblock {\em J. Geom. Anal.}, 27(3):2018--2043, 2017.

\bibitem{Kutzschebauch2018}
F.~Kutzschebauch and E.~F. Wold.
\newblock Carleman approximation by holomorphic automorphisms of {$\Bbb C^n$}.
\newblock {\em J. Reine Angew. Math.}, 738:131--148, 2018.

\bibitem{Oka1939}
K.~Oka.
\newblock Sur les fonctions analytiques de plusieurs variables. {III}.
  {D}euxi{\`e}me probl{\`e}me de {C}ousin.
\newblock {\em J. Sci. Hiroshima Univ.}, 9:7--19, 1939.

\bibitem{Ramos-Peon2019}
A.~Ramos-Peon and R.~Ugolini.
\newblock Parametric jet interpolation for {S}tein manifolds with the density
  property.
\newblock {\em Internat. J. Math.}, 30(8):1950046, 16, 2019.

\bibitem{Ramspott1965}
K.-J. Ramspott.
\newblock Stetige und holomorphe {S}chnitte in {B}\"{u}ndeln mit homogener
  {F}aser.
\newblock {\em Math. Z.}, 89:234--246, 1965.

\bibitem{Rosay1988}
J.-P. Rosay and W.~Rudin.
\newblock Holomorphic maps from {${\bf C}^n$} to {${\bf C}^n$}.
\newblock {\em Trans. Amer. Math. Soc.}, 310(1):47--86, 1988.

\bibitem{Schurmann1997}
J.~Sch\"{u}rmann.
\newblock Embeddings of {S}tein spaces into affine spaces of minimal dimension.
\newblock {\em Math. Ann.}, 307(3):381--399, 1997.

\bibitem{Stout2007}
E.~L. Stout.
\newblock {\em Polynomial convexity}, volume 261 of {\em Progress in
  Mathematics}.
\newblock Birkh\"{a}user Boston, Inc., Boston, MA, 2007.

\bibitem{Varolin2001}
D.~Varolin.
\newblock The density property for complex manifolds and geometric structures.
\newblock {\em J. Geom. Anal.}, 11(1):135--160, 2001.

\bibitem{Winkelmanna}
J.~Winkelmann.
\newblock Tame discrete sets in algebraic groups.
\newblock {\em Transform. Groups}, 26(4):1487--1519, 2021.

\bibitem{Wold2005}
E.~F. Wold.
\newblock Fatou-{B}ieberbach domains.
\newblock {\em Internat. J. Math.}, 16(10):1119--1130, 2005.

\end{thebibliography}
\end{document}